\theoremstyle{plain}
\newtheorem{theorem}{Theorem}[section]
\newtheorem{corollary}[theorem]{Corollary}
\newtheorem{lemma}[theorem]{Lemma}
\newtheorem{example}[theorem]{Example}
\newtheorem{proposition}[theorem]{Proposition}
\theoremstyle{definition}
\newtheorem{definition}[theorem]{Definition}
\newtheorem{remark}[theorem]{Remark}
\newtheorem{thmx}{Theorem}
\renewcommand{\phi}{\varphi}
\let\oldepsilon\epsilon 
\renewcommand{\epsilon}{\varepsilon}
\newcommand{\Om}{\ensuremath{\Omega}}
\newcommand{\bt}{\ensuremath{\beta}}  
\newcommand{\sg}{\ensuremath{\sigma}}
\newcommand{\kp}{\ensuremath{\kappa}}
\DeclareSymbolFont{bbold}{U}{bbold}{m}{n}
\DeclareSymbolFontAlphabet{\mathbbold}{bbold}
\newcommand{\cA}{\ensuremath{\mathcal{A}}}
\newcommand{\cG}{\ensuremath{\mathcal{G}}}
\newcommand{\cL}{\ensuremath{\mathcal{L}}}
\newcommand{\cM}{\ensuremath{\mathcal{M}}}
\newcommand{\cS}{\ensuremath{\mathcal{S}}}
\newcommand{\bN}{\mathbb{bN}}
\newcommand{\bR}{\mathbb{R}}
\newcommand{\bQ}{\mathbb{Q}}
\newcommand{\bZ}{\mathbb{Z}}
\newcommand{\NN}{\ensuremath{\mathbb N}}%Natural Numbers
\newcommand{\QQ}{\ensuremath{\mathbb Q}}% Gives you a shortcut for writing the blackboard Q for the rational numbers - \QQ
\newcommand{\RR}{\ensuremath{\mathbb R}}% Real Numbers
\newcommand{\ZZ}{\ensuremath{\mathbb Z}} % Gives you a shortcut for writing the blackboard Z for the integer numbers - \ZZ
\newcommand{\mumax}{\ensuremath{\mu_{\rm max}}} 
\newcommand{\htop}{\ensuremath{h_{\rm top}}}
\newcommand{\hcon}{\ensuremath{h_{\rm con}}}
\newcommand{\hres}{\ensuremath{h_{\rm res}}}
\newcommand{\Xcon}{\ensuremath{X_{\rm con}}}
\newcommand{\Xres}{\ensuremath{X_{\rm res}}}
\newcommand{\Xlim}{\ensuremath{X_{\rm lim}}}
\newcommand{\ceil}[1]{\ensuremath{\left\lceil#1 \right\rceil}}
\newcommand{\floor}[1]{\ensuremath{\left\lfloor#1 \right\rfloor}}
\providecommand{\phantomsection}{}% In case hyperref is not loaded
\newcommand{\mylabel}[2]{\raisebox{.7\normalbaselineskip}{\phantomsection}(#1)%
	\def\@currentlabel{#1}\textlabel{#2}}
\newcommand\xlabel[2][]{\phantomsection\def\@currentlabelname{#1}\label{#2}}
\numberwithin{equation}{section}
\title[]{Computability of $\cG$-Beroulli Measures and Measures of Maximal Entropy on Coded Shift Spaces}
\date{\today}
\author[T. Kucherenko]{Tamara Kucherenko}
\address{City University of New York, New York, NY, USA}
\email{\href{tkucherenko@ccny.cuny.edu} {tkucherenko@ccny.cuny.edu} }
\thanks{T.K. was partially supported by the grant from the Simons Foundation (SF-MPS-CGM-855117).}
\author[M. L\'opez]{Marco L\'opez}
\address{City University of New York, New York, NY, USA}
\email{\href{mlopez1@gc.cuny.edu}{mlopez1@gc.cuny.edu} }
\thanks{M.L. was partially supported by the National Science Foundation through Grant DMS-2402751.}
\author[C. Wolf]{Christian Wolf}
\address{Department of Mathematics and Statistics\\
Mississippi State University\\
Starkville, MISSISSIPPI, 39759, USA}
\email{\href{cwolf@math.msstate.edu} {cwolf@math.msstate.edu}}
\thanks{C.W. was partially supported by grants from the Simons Foundation (SF-MPS-CGM-637594 and SFI-MPS-TSM-00013897).}
\begin{document}
\maketitle
	
\begin{abstract}

In this paper, we investigate the computability of $\cG$-Bernoulli measures, with a particular focus on measures of maximal entropy (MMEs) on coded shift spaces. Coded shifts are natural generalizations of sofic shifts and are defined as the closure of all bi-infinite concatenations of words (generators) drawn from a countable generating set $\cG$.
We begin by establishing a computability criterion for $\cG$-Bernoulli measures which are invariant measures given by assigning probability weights to the generators. We then apply this criterion to the setting in which the concatenation entropy exceeds the residual entropy, showing that in this case the unique measure of maximal entropy $\mu_{\rm max}$ on $X$ is computable, provided the Vere–Jones parameter $\kappa$ of $\cG$ is computable, based on having oracle access to the generators and the language of $X$.
As a consequence, the unique MME is computable for several well-known classes of shift spaces, including $S$-gap shifts, multiple-gap shifts, and $\beta$-shifts. Moreover, the two ergodic MMEs of the Dyck shift are also computable.
Finally, we examine the opposite situation, where the residual entropy exceeds the concatenation entropy and the MME is known to be non-unique in general. We show that even when $\mu_{\rm max}$ is unique and the parameter $\kappa$ is computable, the measure $\mu_{\rm max}$ may still fail to be computable.

\end{abstract}

% \tableofcontents

\section{Introduction}
Computers are a powerful tool to analyze models of natural phenomena. Heuristically, it is widely believed that computer simulations can predict the correct qualitative behavior in many applications. For dynamical systems this belief is often based on the shadowing property of pseudo-orbits \cite{kruger_complexity_1994, boffetta_predictability_2002, Skeel, gora_why_1988}. It is of practical interest to establish guarantees that numerical approximations and simulations adequately represent the mathematical object being approximated or modeled. In essence, a mathematical object (e.g. a number, a function, a set, a measure, etc.) is computable if there is a computer algorithm producing a sequence of outputs converging to the said object \emph{and} the error bounds decrease at a prescribed rate. The latter is required so that the algorithm also provides sufficient criteria to determine how far along it needs to compute the sequence to obtain an approximation within a specified error tolerance. For the last two decades, much effort has been directed towards computability results for dynamical quantities that would guarantee that simulations produce typical dynamical behavior \cite{burr_computability_2022, burr_computability_2024}, although this line of work extends further back \cite{benettin_reliability_1979, kruger_complexity_1994}. Two such dynamical quantities are entropy and measures of maximal entropy, the latter being the focus of this paper.

\subsection{Entropy and Measures of Maximal Entropy}
 Conceptually, the entropy of a dynamical system provides a way to quantify the exponential rate at which measurement errors will propagate over time. Such a concept can be precisely defined in either a topological or measure-theoretic setting, the two being related by a variational principle. The measure-theoretic (i.e., probabilistic) side was initially developed by Shannon \cite{shannon_mathematical_1948} in the context of sequences of symbols in information theory and was later extended to symbolic dynamics by Kolmogorov \cite{kolmogorov_1958, kolgomorov_entropy_1959}. The monograph by Dajani and Kraaikamp \cite{dajani_ergodic_2002} provides a good introduction to these concepts.

The Shannon and Kolmogorov entropies are the properties of a measure, and measures with maximal entropy are of particular interest. Such measures represent an equilibrium probability distribution and they are the result of a widely used principle to approximate unknown distributions. This maximum entropy principle, due to Jaynes, states that the best approximation should satisfy any known constraints while maximizing entropy, which can also be interpreted as being as close to a uniform distribution as possible while respecting known constraints \cite{jaynes_information_1957}. Thus, finding an MME solves a relevant optimization problem, and this turns out to be useful in a number of applied settings, from modeling species distributions in ecology to neuroscience (see \cite{Phillips, Tang} and the references therein for examples of such applications). 

\subsection{Coded shifts}\label{sec:coded_shifts}
Coded shifts provide a fairly general setting in symbolic dynamics. These shift spaces were introduced by Blanchard and Hansel \cite{blanchard_systemes_1986}, and are a natural generalization to irreducible  sofic and synchronized shifts. They include many classes of examples such as $S$-gap shifts \cite{burr_computability_2022}, $\beta$-shifts \cite{dajani_ergodic_2002}, and its generalizations, see, e.g., \cite{thompson_generalized_2017, burr_computability_2022}. Recall that all finite-alphabet shift spaces are associated with a countable (possibly infinite) directed labeled graph, which is called a representation of the shift space. In this construction, the shift space is given as the closure of the set of bi-infinite sequences of symbols associated with all bi-infinite paths on the graph \cite{lind_introduction_2021}. Coded shifts are precisely those shift spaces that have a strongly connected representation. An alternative way to define coded shifts  is by taking a countable set of words  $\cG=\{g_i: i\in\NN\},$ called a generating set, and considering the {\bf concatenation set} $X_{\rm con}$ of bi-infinite concatenations of generators, that is: 
\[
\Xcon=\{\cdots g_{i_{-1}}g_{i_0}g_{i_1}\cdots\colon i_k\in\NN \}. 
\]
The coded shift $X=X(\cG)$ is then the closure of the concatenation set, whereas the {\bf residual set} of $X$ is defined by $\Xres=X\backslash\Xcon$.  We refer to \cite{kucherenko_ergodic_2024} for more details about coded shifts. If the generating set is finite we can make the same construction with a finite index set. We note that in the finite generating set case the coded shift $X$ is sofic and $X_{\rm res}=\emptyset$ in which case the computability of the unique MME is straightforward. Therefore, we focus in this paper on the infinite generating set case.

Two distinct generating sets $\cG$ and $\cG'$ may generate the same coded shift. For example, if we let $g_1=0,$ $g_2=01,$ and $g_3=00$ then $\cG=\{g_1,g_2\}$ and $\cG'=\{g_1,g_2,g_3\}$ generate the same coded shift. However, notice that the sequence $0^\infty,$ which is in $\Xcon$ has many distinct representations under $\cG',$ (e.g., $\cdots g_1g_3g_1g_3\cdots$ and $\cdots g_1g_1g_1\cdots$) but only one representation under $\cG.$ If every sequence in $\Xcon$ has a unique representation under $\cG$ we say that $\cG$ {\bf uniquely represents} $X_{\rm con}$ (also known as \emph{unambiguous} in \cite{beal_unambiguously_2024}, or \emph{uniquely decomposable} in \cite{pavlov_entropy_2020}). Unique representability of $X_{\rm con}$ is an essential condition for many results about coded shifts. It has been recently established in \cite{beal_unambiguously_2024} that every coded shift admits a generating set that uniquely represents the corresponding concatenation set, and the proof there provides a theoretical framework for constructing such a generating set. In fact, this construction has now been fully implemented algorithmically in \cite{kucherenko_Tupper}, which gives an explicit procedure that takes any generating set $\mathcal{G}'$ as input and outputs a generating set $\mathcal{G}$ with unique representability property and such that $X(\mathcal{G}) = X(\mathcal{G}')$.
 In this paper, we use as a standing assumption that $X=X(\cG)$ is a coded shift such that the generating set $\cG$ uniquely represents $X_{\rm con}$.
We refer to Section \ref{definitions} for relevant definitions and properties of coded shifts.

Our work focuses on classifying the computability of MMEs for coded shifts. Despite being irreducible, coded shifts may not have a unique MME, as shown in \cite{krieger_uniqueness_1974} for the Dyck shift and in \cite{kucherenko_ergodic_2024} for several other examples. Even when the MME is unique, it may not be computable from the language and the generators as inputs (see Theorem \ref{thmnoncomput}.) A key sufficient condition that allows us to derive computability results is a relationship between the entropies of the sets $\Xcon$ and $\Xres$, defined as follows:
\begin{equation}\label{eq1}\hcon(X) = \sup\{h_\sg(\mu)\colon \mu(\Xcon)=1, \mu\in\cM_\sg(X)\}, \end{equation}
and
\begin{equation}\label{eq2}\hres(X) = \sup\{h_\sg(\mu)\colon \mu(\Xres)=1, \mu\in\cM_\sg(X)\},\end{equation}
where $\cM_\sigma(X)$ is the set of shift-invariant Borel probability measures on $X$ and $h_\sg(\mu)$ denotes the metric entropy of $\mu.$ We note that these quantities are well-defined since $X_{\rm con}$ and $X_{\rm res}$ are Borel sets on $X$, see \cite{kucherenko_ergodic_2024}.
In \cite{kucherenko_ergodic_2024} the authors establish that $\hcon(X)>\hres(X)$ is a sufficient condition for the uniqueness of the MME. Moreover, they show that such measures satisfy a property called $\cG$-Bernoulli, in reference to the similarities with Bernoulli measures. More precisely, an invariant probability measure $\mu$ is $\cG$-Bernoulli if there exist positive numbers $p_g$ with $\sum_{g\in\cG}p_g=1$ and $c=\sum_{g\in\cG}|g|p_g<\infty$ such that on sets (called $\cG$-cylinders) of the form
\[
\llbracket g_{i_0}\cdots g_{i_k} \rrbracket =\{\cdots g'_{i_{-1}}.g'_{i_{0}}g'_{i_{1}}\cdots \in X_{\rm con}\colon g'_{i_j}=g_{i_j}\,\, {\rm for}\,\,\, j=0,\dots,k \}
\]
the measure $\mu$ is given by 

\begin{align}\label{measure}
\mu(\llbracket g_{i_1}\cdots g_{i_k}\rrbracket)=\frac{1}{c}p_{g_{i_1}}\cdots p_{g_{i_k}}.    
\end{align}
Here $c$ is the normalizing constant assuring that $\mu$ is a probability measure.
 There is a strongly connected directed labeled graph $\Gamma$ associated with a coded shift generated by $\cG.$ The edge shift of $\Gamma$ embeds naturally as a subsystem of $(X,\sigma)$. Vere-Jones \cite{Vere-Jones} introduced the following classification of $\Gamma:$
    \begin{itemize}
        \item $\Gamma$ is transient if $\sum_{g\in\cG}\exp(-|g|h(\Gamma))<1,$
        \item $\Gamma$ is null-recurrent if $\sum_{g\in\cG}\exp(-|g|h(\Gamma))=1$ and $\sum_{g\in\cG}|g|\exp(-|g|h(\Gamma))=\infty,$ and 
        \item $\Gamma$ is positively-recurrent if $\sum_{g\in\cG}\exp(-|g|h(\Gamma))=1$ and $\sum_{g\in\cG}|g|\exp(-|g|h(\Gamma))<\infty,$
    \end{itemize}
where $h(\Gamma)$ is the Gurevich entropy associated to $\Gamma$. In \cite{gurevich_shift_1970}, Gurevich showed that  positive recurrence is equivalent to the existence and uniqueness of an invariant Borel probability measure with metric entropy $h(\Gamma)$ (that is, an MME on the graph $\Gamma.$) When a coded shift $X$ corresponds to a positively-recurrent graph and the additional condition $\hcon(X)>\hres(X)$ is satisfied then $X$ admits a unique MME of the form \eqref{measure}, where $c=\kp=\kp(\cG)=\sum_{g\in\cG}|g|\exp(-|g|h_{\rm con}(X))$ is the parameter appearing in Vere-Jones' classification \cite{kucherenko_ergodic_2024}. We refer to the constant $\kp$ as the {\it Vere-Jones} parameter of the generating set $\cG$.

\subsection{Computability}

Various computability results have been established for shifts of finite type (including higher dimensional ones) \cite{burr_computability_2018, hedges_computability_2025} and sofic shifts \cite{spandl}. Recently, the computability of the topological pressure was characterized for coded shifts and continuous potentials in \cite{burr_computability_2022}.
Informally, a mathematical object --- for example a real number, a set, or a measure --- is said to be computable if one can design a Turing machine (a computer program for our purposes) capable of producing approximations of the object with arbitrarily high accuracy. Here, the required accuracy is part of the input to the computer program and the program must have the ability to halt once the computation has reached the required level of  approximation. We refer to Section \ref{definitions} for precise definitions. The computability framework for measures has been laid out in previous work, such as in \cite{burr_computability_2018, binder_computability_2011, hoyrup_computability_2009}.
In this article, we focus our attention on the computability of MMEs for coded shifts. 

\subsection{Statement of the results}
To formulate our main results we first introduce some additional notation. We refer to Section \ref{definitions} for details.
    We say a real number $c$ is computable if there exists a Turing machine (a computer program for our purposes) $T=T(n)$ which on input $n\in \bN$, outputs a rational number $T(n)$ such that $|T(n)-c|<2^{-n}$. Moreover, we say a sequence $(p_i)_{i\in \bN}$ of real numbers is uniformly computable if there exists a Turing machine $T=T(n,i)$ which on input $n,i\in \bN$, outputs a rational number $T(n,i)$ such that $|T(n,i)-p_i|<2^{-n}$. We stress that for uniform computability there needs to exist one Turing machine which approximates
all $p_i$ at any described precision. In particular, it is not sufficient that each $p_i$ is computable. 

Let $\cA=\{0,\dots,d-1\}$ be a finite alphabet, and let $\cA^*=\bigcup_{n=1}^{\infty}\cA^n$ denote the set of finite words over $\cA$. Let $X$ be a shift space over $\cA$, i.e., a closed shift-invariant subset of $\cA^\bZ$ (see Section 2.2 for details). The language $\cL(X)$ of $X$ is the set of all words  $w\in \cA^*$ such that $w$ occurs in some $x\in X$. By an oracle for the language $\mathcal{L}(X)$ we mean an enumeration of all words in the language, listed in nondecreasing order of length. Similarly, an oracle for a generating set $\mathcal{G}$ of a coded shift $X = X(\mathcal{G})$ is an enumeration of the generators, again ordered by nondecreasing word length, see Definition \ref{def:oracles_of_G_and_L(X)}. Assume that $\cG$ is given by an oracle and let $\mu$ be a $\cG$-Bernoulli measure given by positive real numbers $p_g$, see equation \eqref{measure}. We recall from \eqref{measure} that the normalization constant is given by
$$c=c(\cG,(p_i)_i)=\sum_{i=1}^\infty |g_i|p_i<\infty.$$ 
For $\ell\in \bN$ set $c_\ell=\sum_{i=1}^\ell |g_i|p_i$.
We say $c$ is $\cG$-computable if there exists a Turing machine $T=T(n)$, which based on  input $n\in \bN$, outputs $\ell\in \bN$ such that $| c-c_{\ell}|< 2^{-n}$. We note that the uniform computability of $(p_i)_i$ implies that $(c_\ell)_\ell$ is  uniformly computable.
With these definitions in mind, we are now able to formulate our first main result which concerns the computability of $\cG$-Bernoulli measures:

\begin{thmx}\label{thmmain} Let $X = X(\cG)$ be a coded shift with generating set $\cG$ such that $\cG$ uniquely represents $X_{\rm con}(\cG)$. Let $\mu$ be a $\cG$-Bernoulli measure given by positive real numbers $p_g$ with normalizing constant $c<\infty$, see equation \eqref{measure}. Suppose $c$ is $\cG$-computable and $(p_g)_{g\in \cG}$ is uniformly computable. Then $\mu$ is computable based on having oracle access to $\cG$. 
\end{thmx}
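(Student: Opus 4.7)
The plan is to express $\mu([w])$ for a word $w\in\cA^{*}$ of length $n$ as an explicitly convergent series over finite ``configurations'' of generators that cover the positions $0,1,\dots,n-1$ in sequences of $X_{\rm con}$, and then to approximate this series to within the prescribed precision by truncating both the number of generators used and their index range in $\cG$. To write down the formula, I would use that, by unique representability, each $x\in X_{\rm con}$ admits a unique bi-infinite concatenation into generators, and position $0$ lies at a unique offset $s\in\{0,1,\dots,|g_{i_0}|-1\}$ inside a uniquely determined generator, relabeled $g_{i_0}$. Let $(g_{i_0},g_{i_1},\dots,g_{i_m})$ denote the minimal initial block of generators meeting positions $0,1,\dots,n-1$. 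The subsets $\sigma^{s}(\llbracket g_{i_0}\cdots g_{i_m}\rrbracket)$ indexed by these configurations $(s,g_{i_0},\dots,g_{i_m})$ partition $[w]\cap X_{\rm con}$, and shift-invariance of $\mu$ together with formula~(\ref{measure}) assigns each of them the measure $\frac{1}{c}p_{g_{i_0}}p_{g_{i_1}}\cdots p_{g_{i_m}}$. Since $\cG$-Bernoulli measures satisfy $\mu(X_{\rm res})=0$, this yields
\[
\mu([w])=\frac{1}{c}\sum_{(s,g_{i_0},\dots,g_{i_m})\text{ valid and matching}}p_{g_{i_0}}p_{g_{i_1}}\cdots p_{g_{i_m}},
\]
where \emph{valid} encodes $0\le s<|g_{i_0}|$ together with $|g_{i_0}|+\cdots+|g_{i_{m-1}}|<s+n\le|g_{i_0}|+\cdots+|g_{i_m}|$, and \emph{matching} means the letters at positions $s,\dots,s+n-1$ of the concatenation $g_{i_0}g_{i_1}\cdots g_{i_m}$ spell $w$. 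The validity constraints force $|g_{i_1}|+\cdots+|g_{i_{m-1}}|<n$, so each valid configuration uses at most $n+1$ generators.

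Next I would control the tail. Given precision $\epsilon=2^{-n'}$, let $A_L$ denote the restriction of the above series to configurations whose generator indices all lie in $\{1,\dots,L\}$. A union bound combined with the marginal distributions of the generators under $\mu$---namely, $g_{i_0}$ is size-biased with marginal $|g_k|p_{g_k}/c$, while each other generator appearing in the configuration has marginal $p_{g_k}$---yields
\[
|\mu([w])-A_L/c|\le\frac{c-c_L}{c}+n\Bigl(1-\sum_{k\le L}p_{g_k}\Bigr).
\]
By $\cG$-computability of $c$, the first summand is computable and tends to $0$ as $L\to\infty$; by uniform computability of $(p_{g_k})_k$ together with $\sum_k p_{g_k}=1$, the second summand is also computable and tends to $0$. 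Hence an $L$ making the right-hand side smaller than $\epsilon/2$ can be located by an unbounded search.

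With such an $L$ fixed, $A_L$ is a finite sum. Using the oracle for $\cG$ to list the first $L$ generators together with their lengths, we enumerate all tuples $(g_{i_0},\dots,g_{i_m})$ with $m\le n$ and indices in $\{1,\dots,L\}$, enumerate the finitely many valid offsets $s$, check by direct inspection which of these configurations spell $w$ at positions $s,\dots,s+n-1$, and accumulate the corresponding products $p_{g_{i_0}}\cdots p_{g_{i_m}}$ using uniform computability of $(p_{g_k})_k$ to control the arithmetic error. Dividing by a sufficiently accurate approximation of $c$ (from $\cG$-computability) yields $A_L/c$ to within $\epsilon/2$, and hence $\mu([w])$ to within $\epsilon$, uniformly in $w$ and $n'$. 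The main obstacle is producing the tail estimate in a form that is \emph{provably} computable from the two hypotheses; the key insight is the size-biased/iid structure of the generator marginals under a $\cG$-Bernoulli measure, which is precisely what makes the two error terms controlled by $c-c_L$ and by $1-\sum_{k\le L}p_{g_k}$ respectively.
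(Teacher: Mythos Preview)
Your approach is essentially the same as the paper's: both decompose $[w]\cap X_{\rm con}$ into a disjoint union of shifted $\cG$-cylinders $\sigma^{s}(\llbracket g_{i_0}\cdots g_{i_m}\rrbracket)$, express $\mu([w])$ as the corresponding series in the $p_g$'s, truncate to finitely many generators, and control the tail using the $\cG$-computability of $c$. Two points are worth noting.

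First, you stop after establishing that $\mu([w])$ is uniformly computable on standard cylinders. In the paper, ``computable measure'' means approximable by finitely supported rational measures in the Wasserstein--Kantorovich metric $W_1$ (Definitions~\ref{defWK} and~\ref{defmui}), and the passage from cylinder-computability to this notion is handled separately as Proposition~\ref{mu_computable}. Your write-up should either invoke that equivalence or include the short argument; otherwise the conclusion ``$\mu$ is computable'' is not yet reached.

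Second, your tail bound splits into two pieces, $(c-c_L)/c$ and $n\bigl(1-\sum_{k\le L}p_{g_k}\bigr)$, and you propose an unbounded search to make the second piece small. This works, but the search is unnecessary: since $|g_k|\ge 1$ for every $k$, one has
\[
1-\sum_{k\le L}p_{g_k}=\sum_{k>L}p_{g_k}\le\sum_{k>L}|g_k|\,p_{g_k}=c-c_L,
\]
so both pieces are already controlled by $c-c_L$ and the $\cG$-computability of $c$ alone suffices. The paper achieves the same economy with a slightly different bookkeeping: it truncates by maximal generator length $N\ge|w|$ rather than by index, and bounds the discarded part by $\sum_{|g|>N}(|g|+|w|-1)p_g<2\sum_{|g|>N}|g|p_g$, which is again a single tail of the series defining $c$.
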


Next we apply Theorem \ref{thmmain} to coded shifts $X=X(\cG)$ satisfying $h_{\rm con}(X)>h_{\rm res}(X)$. For this case it is shown in \cite{kucherenko_ergodic_2024} that the (unique) measure of maximal entropy is $\cG$-Bernoulli with $p_g=\exp(-|g|\htop(X))$ and $c=\kp=\sum_{g\in \cG} |g| \exp(-|g|\htop(X))$. 

\begin{corollary}\label{cormain11} Let $X = X(\cG)$ be a coded shift with generating set $\cG$ such that $\cG$ uniquely represents $X_{\rm con}(\cG)$ and $\hcon(X)>\hres(X)$. If the Vere-Jones parameter $\kp(\cG)$ is $\cG$-computable, then the (unique) MME $\mu_{\rm max}$ of $X$ is computable given oracle access to $\mathcal{G}$ and  $\cL(X)$.
\end{corollary}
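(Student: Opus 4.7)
The plan is to deduce Corollary \ref{cormain11} from Theorem \ref{thmmain}. Under the hypothesis $\hcon(X)>\hres(X)$, the results of \cite{kucherenko_ergodic_2024} recalled before the statement identify the unique MME $\mu_{\rm max}$ as the $\cG$-Bernoulli measure with weights $p_g=\exp(-|g|\htop(X))$ and normalizing constant $c=\kp(\cG)$. Since $\kp(\cG)$ is $\cG$-computable by hypothesis, the only remaining condition of Theorem \ref{thmmain} to verify is the uniform computability of $(p_g)_{g\in\cG}$. Because the $\cG$-oracle provides the lengths $|g_i|$ computably in $i$ and $\exp$ is computable on the computable reals, this reduces to the computability of the single real number $\htop(X)$ from the oracles for $\cG$ and $\cL(X)$.

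To compute $\htop(X)$, I would use that $\hcon(X)>\hres(X)$, together with Gurevich's theorem, gives positive recurrence of the representing graph $\Gamma$, which by the Vere-Jones classification is equivalent to the identity $\phi(t):=\sum_{g\in\cG} e^{-|g|t}=1$ at $t=\htop(X)$. Since $\phi$ is continuous and strictly decreasing on its abscissa of convergence, $\htop(X)$ is the unique root of $\phi(t)=1$ and it suffices to locate this root by bisection. The side $\phi(t)>1$ is semi-decidable: the $\cG$-oracle supplies computable monotone partial sums $\phi_\ell(t)$, and one waits until some $\phi_\ell(t)$ exceeds $1$. For the opposite inequality $\phi(t)<1$, the plan is to bound the tail $\sum_{g:|g|>N}e^{-|g|t}$ using the $\cL(X)$-oracle: with $a_n:=|\{g\in\cG:|g|=n\}|\le |\cL_n(X)|$ and subadditivity of $\log|\cL_n(X)|$, the quantities $h_N:=\frac{1}{N}\log|\cL_N(X)|$ are upper bounds on $\htop(X)$ converging to it, so as soon as a computed $h_N$ lies strictly below $t$, a geometric tail estimate yields an effective upper bound on $\phi(t)$ and certifies $\phi(t)<1$ once the combined estimate drops below $1$.

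The main obstacle is this second test, because there is no \emph{a priori} rate for $h_N\to\htop(X)$ and so the algorithm cannot fix $N$ in advance. Running the two semi-decisions in parallel at each bisection point is nonetheless sufficient: for $t>\htop(X)$ the $\cL(X)$-oracle eventually produces $N$ with $h_N<t$, while for $t<\htop(X)$ a partial sum $\phi_\ell(t)$ eventually exceeds $1$. This produces arbitrarily tight rational enclosures of $\htop(X)$, and hence its computability. Substituting back, $(p_g)_{g\in\cG}$ is uniformly computable, and Theorem \ref{thmmain} delivers computability of $\mu_{\rm max}$.
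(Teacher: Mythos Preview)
Your reduction is exactly the paper's: invoke \cite{kucherenko_ergodic_2024} to identify $\mu_{\rm max}$ as the $\cG$-Bernoulli measure with $p_g=\exp(-|g|\htop(X))$ and $c=\kp$, then apply Theorem~\ref{thmmain} once $(p_g)_g$ is seen to be uniformly computable, which boils down to the computability of $\htop(X)$ from oracles for $\cG$ and $\cL(X)$. The paper disposes of this last point in one line by citing \cite{burr_computability_2022}; you instead sketch a self-contained argument via the characteristic equation $\phi(t)=\sum_{g\in\cG}e^{-|g|t}=1$, dovetailing the semi-decision $\phi(t)>1$ (partial sums from the $\cG$-oracle) against the semi-decision $\phi(t)<1$ (effective geometric tails once the subadditive upper bounds $h_N=\tfrac1N\log|\cL_N(X)|$ drop below $t$). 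This is correct and has the merit of being explicit---your use of positive recurrence to ensure $\phi(\htop(X))=1$ is justified, since $\hcon(X)>\hres(X)$ forces the MME onto $X_{\rm con}$ and Proposition~\ref{A1} then gives $\sum_g p_g=1$. The only point left implicit is the standard trisection/soft-comparison device needed when a bisection midpoint happens to equal $\htop(X)$, but your phrase ``running the two semi-decisions in parallel'' already points to the right fix. The paper's route is shorter but black-boxes the entropy computation; yours shows concretely why both oracles are needed.
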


The corollary shows that the computability of the unique MME depends upon the $\cG$-computability of the Vere-Jones parameter. We show in Example \ref{ex:Non-computability_VJP} that the parameter $\kp$ is, in general, not $\cG$-computable without additional information about the coded shift and the associated generating set. Nevertheless, the next result provides a criterion that is easy to verify for many common classes of coded shifts, guaranteeing the computability of the Vere-Jones parameter. To formulate this result, we introduce  a parameter $r$, associated with a generating set  $\cG$, defined by 
\begin{equation}
    \label{eqbG}r=r(\cG)=\sup\left\{\frac1k \log |\cG_k|: k\in\bN\right\}.\end{equation}
    where $\cG_k=\{g\in\cG: |g|=k\}$.
With this definition in place, we can now state our second main result.

\begin{thmx}\label{thmmain2}
Let $X = X(\cG)$ be a coded shift with generating set $\cG$ such that $\cG$ uniquely represents $X_{\rm con}(\cG)$ and $\hcon(X)>\hres(X)$. If $h_{\rm top}(X)>r(\cG)$, then the Vere-Jones parameter $\kp$ is $\cG$-computable and the unique MME $\mu_{\rm max}$ of $X$ is   computable based on having oracle access to $\cG$, the language $\cL(X)$, as well as a rational $\varepsilon$ satisfying $0<\varepsilon<h_{\rm top}(X)-r(\cG)$.

\end{thmx}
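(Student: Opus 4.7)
The plan is to reduce Theorem \ref{thmmain2} to Corollary \ref{cormain11}, so the real work is to show that, under the hypothesis $h_{\rm top}(X) > r(\cG)$ together with the rational witness $\varepsilon$, the Vere-Jones parameter $\kp$ is $\cG$-computable. Set $h = h_{\rm top}(X)$. The definition of $r(\cG)$ gives $|\cG_k| \le e^{k r(\cG)}$ for every $k \ge 1$, and the hypothesis $\varepsilon < h - r(\cG)$ yields the uniform geometric bound $|\cG_k|\,e^{-kh} \le e^{-k\varepsilon}$. Summing and multiplying by $k$ produces the tail estimate
\begin{equation*}
\sum_{k>N} k\,|\cG_k|\,e^{-kh} \;\le\; \sum_{k>N} k\,e^{-k\varepsilon} \;=:\; T_N(\varepsilon),
\end{equation*}
and $T_N(\varepsilon)$, being the tail of a differentiated geometric series, admits explicit closed-form rational upper bounds that decay geometrically in $N$.

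The algorithm witnessing $\cG$-computability of $\kp$ then proceeds as follows. On input $n$, compute (by rational arithmetic from $\varepsilon$) an integer $N = N(n,\varepsilon)$ for which one of these upper bounds is strictly less than $2^{-n}$. Next, use the $\cG$-oracle, which enumerates generators in nondecreasing order of length, and query it until a generator of length strictly greater than $N$ appears; because the alphabet is finite, at most $|\cA|^{N+1}$ queries suffice. Let $\ell$ be the index of the last generator of length $\le N$. Since the enumeration is in nondecreasing order, the set $\{i>\ell\}$ corresponds to $\bigcup_{k>N}\cG_k$, and
\begin{equation*}
|\kp - c_\ell| \;=\; \sum_{i>\ell}|g_i|\,e^{-|g_i|h} \;=\; \sum_{k>N} k\,|\cG_k|\,e^{-kh} \;\le\; T_N(\varepsilon) \;<\; 2^{-n}.
\end{equation*}
Crucially, only the index $\ell$ is returned, so no evaluation of $h$ or of $e^{-|g|h}$ is required at this stage; the dependence of $\kp$ on $h$ is handled abstractly through the definition of $c_\ell$.

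With $\kp$ now shown to be $\cG$-computable, and the remaining hypotheses of Corollary \ref{cormain11} coinciding with those of Theorem \ref{thmmain2} (namely unique representability and $\hcon(X)>\hres(X)$), Corollary \ref{cormain11} supplies the computability of $\mu_{\rm max}$ from oracle access to $\cG$ and $\cL(X)$. The main obstacle, and essentially the only non-routine point, is recognizing that the gap $h_{\rm top}(X) - r(\cG)$ supplies a uniform exponential decay rate that converts directly into an a priori tail estimate, independent of the unknown precise value of $h$. Once this observation is in hand, the remainder is elementary rational bookkeeping combined with the finite-alphabet fact that makes the oracle search halt in finite time.
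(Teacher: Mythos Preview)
Your reduction to Corollary \ref{cormain11} matches the paper's own proof outline. The difference lies in how $\cG$-computability of $\kp$ is established. The paper packages this as a separate Lemma (Lemma \ref{c_computable}), which takes an oracle for $\hcon(X)$ as an additional input. Its proof first computes rationals $q,\delta$ with $r(\cG)<q-\delta<q+\delta<\hcon(X)$, bounds the remainder $R_N(t)\le \sum_{k>N} k e^{-k\delta}$, and then---because it actually evaluates $f(h)$ at a rational approximation $h$ of $\hcon(X)$---needs a Lipschitz estimate on $f$ to transfer the approximation back to $f(\hcon(X))=\kp$. The oracle for $\hcon(X)$ is in turn supplied by the computability of $h_{\rm top}(X)$ from $\cG$ and $\cL(X)$ via \cite{burr_computability_2022}.

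Your argument is a clean streamlining of this: you exploit the fact that $\cG$-computability only asks for an index $\ell$, not a numerical approximation of $\kp$ itself. Hence the tail bound $\sum_{k>N} k|\cG_k|e^{-kh}\le \sum_{k>N} k e^{-k\varepsilon}$ can be applied directly at the true (unknown) value $h$, eliminating both the need to compute $h_{\rm top}(X)$ at this stage and the Lipschitz step. What the paper's formulation buys in return is a reusable lemma phrased with $\hcon(X)$ as an explicit oracle input, which it invokes again in the proof of Theorem \ref{thmmain3}; your direct argument is shorter for Theorem \ref{thmmain2} alone but less modular. Both are correct.
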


We note that while it is easy to see that $\frac1k \log |\cG_k|<h_{\rm top}(X)$ for all $k\in\bN$, it is not known if $h_{\rm top}(X)>r(\cG)$ holds in general.

Our techniques can also be applied to establish the computability of an entropy maximizing measure on the concatenation set in the case $\hres(X)\geq\hcon(X)$. We continue to refer to $\kp(\cG)=\sum_{g\in \cG} |g| \exp(-|g|\hcon(X))$ as the Vere-Jones parameter of $\cG$. We say that a generating set $\cG$ has bounded growth rate if $b(\cG)=\sup\{|\cG_k|: k\in \bN\}<\infty$. Coded shifts with bounded growth rate, which were first considered in \cite{kucherenko_ergodic_2024},  contain several well-known classes of shift spaces, e.g. $\beta$-shifts and generalized gap shifts.

\begin{thmx}\label{thmmain3} Let $X=X(\cG)$ be a coded shift with generating set $\cG$ such that $\cG$ uniquely represents $\Xcon$. Suppose that there exists $\mu\in \cM_\sigma(X)$ with $\mu(\Xcon)=1$ and $h_\sigma(\mu)=\hcon(X)$. Then
\begin{itemize}
    \item[(i)] $\mu$ is the unique invariant entropy-maximizing measure putting full measure on $X_{\rm con}$.
    \item[(ii)] $\mu$ is $\cG$-Bernoulli with $p_g=\exp(-|g|\hcon(X))$ and $\kp=\sum_{g\in \cG} |g| \exp(-|g|\hcon(X))$.
    \item[(iii)] Suppose $\cG$ has bounded growth rate with an upper bound of $b(\cG)$ given by an oracle. Then the Vere-Jones parameter $\kp$ is $\cG$-computable. Moreover,  the measure $\mu$ is computable based on having oracle access to $\cG$.
\end{itemize} 
\end{thmx}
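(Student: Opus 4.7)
The plan is to exploit the correspondence between $\sigma$-invariant Borel probability measures on $\Xcon$ and shift-invariant measures on the countable full shift $\cG^{\bZ}$. By the unique representability hypothesis, the natural concatenation map $\cG^{\bZ}\to \Xcon$ realizes $\Xcon$ as a suspension over $\cG^{\bZ}$ with roof function $g\mapsto|g|$. By Abramov's formula, any measure $\mu$ on $\Xcon$ corresponds to a shift-invariant measure $\nu$ on $\cG^{\bZ}$ satisfying
\[
h_\sigma(\mu)\;=\;\frac{h(\nu)}{\sum_{g\in\cG}|g|\,\nu([g])}.
\]
Combining the standard bound $h(\nu)\leq -\sum_g\nu([g])\log\nu([g])$, with equality iff $\nu$ is Bernoulli, with a Lagrange-multiplier maximization of $\bigl(-\sum p_g\log p_g\bigr)/\sum|g|p_g$ over probability vectors $(p_g)_{g\in\cG}$, one identifies the unique maximizer as $p_g=e^{-h|g|}$, where $h$ is determined by $\sum_g e^{-h|g|}=1$. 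Since the hypothesized $\mu$ realizes $h_\sigma(\mu)=\hcon(X)$, both inequalities must be equalities, forcing $\nu$ to be Bernoulli with weights $p_g=e^{-\hcon(X)|g|}$; this gives both (i) and (ii), with $\kp=\sum_{g\in\cG}|g|\,e^{-\hcon(X)|g|}$.

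\textbf{Part (iii).} Set $F(h)=\sum_{g\in\cG}e^{-h|g|}$, so by parts (i) and (ii) the value $\hcon(X)$ is the unique positive solution of $F(h)=1$, with $F$ strictly decreasing on $(0,\infty)$. The bounded growth rate gives $F(h)\leq b(\cG)/(e^h-1)$ for $h>0$, and $F(\hcon(X))=1$ then forces $\hcon(X)\leq\log(b(\cG)+1)$. For a positive lower bound we enumerate the first $N\geq 2$ generators via the oracle; calling $L$ the largest of their lengths, $1=F(\hcon(X))\geq N e^{-\hcon(X)L}$ yields $\hcon(X)\geq\log(N)/L>0$. For any candidate $h$ in this bracket, the partial sum $F_L(h)=\sum_{|g|\leq L}e^{-h|g|}$ is computable from the oracle, and the tail obeys $0\leq F(h)-F_L(h)\leq b(\cG)\,e^{-h(L+1)}/(1-e^{-h})$. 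Enlarging $L$ approximates $F(h)$ to arbitrary precision, and a standard bisection on $h$ then produces a Turing machine that computes $\hcon(X)$. Once $\hcon(X)$ is in hand, the sequence $(p_g)=(e^{-\hcon(X)|g|})$ is uniformly computable from the oracle, and the same bounded growth tail estimate applied to $\sum_{g\in\cG}|g|\,e^{-\hcon(X)|g|}$ shows that $\kp$ is $\cG$-computable. All hypotheses of Theorem~A are then in place, yielding computability of $\mu$.

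\textbf{Main obstacle.} The most delicate step is the bisection in part (iii): the tail bound $b(\cG)\,e^{-h(L+1)}/(1-e^{-h})$ degenerates as $h\downarrow 0$, so one must first enumerate enough generators to certify an explicit positive lower bound on $\hcon(X)$, and then interleave enlargement of $L$ with refinement of the bisection interval so as to obtain error estimates that decrease at the prescribed rate required by the formal definition of $\cG$-computability. This kind of bootstrapping is standard in effective analysis, but must be carried out carefully to conform to the oracle model used throughout the paper.
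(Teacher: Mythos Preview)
Your proposal is correct. For parts (i)--(ii) the paper and you both reduce to the induced system on $\cG^{\bZ}$, but the presentations differ: the paper (via its Proposition~\ref{A1}) invokes the concatenation pressure $P_{\rm con}(-\hcon(X))$ and the equilibrium-state theory for countable Markov shifts from \cite{MU}, whereas you give the more elementary route of Abramov's formula plus the relative-entropy inequality $-\sum q_g\log q_g - h\sum|g|q_g = -D(q\Vert p)\le 0$ to identify the unique maximizer directly. Your argument is self-contained and avoids the heavier machinery; the paper's route, on the other hand, situates the result within the thermodynamic formalism already developed in \cite{kucherenko_ergodic_2024}, so no new estimates are needed.

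For part (iii) the two arguments are essentially identical: both compute $\hcon(X)$ by solving the characteristic equation $\sum_g e^{-h|g|}=1$ via bisection, using the bounded-growth hypothesis for tail control. The only cosmetic difference is that the paper routes the computation of $\kp$ through Lemma~\ref{c_computable} (first producing an explicit $\epsilon<\hcon(X)-r(\cG)$), while you bound the tail of $\sum_g|g|e^{-\hcon(X)|g|}$ directly; these are the same estimate. Your ``main obstacle'' paragraph correctly identifies the one genuine care point---securing an explicit positive lower bound on $\hcon(X)$ before the tail bounds become effective---and your device of reading off $\hcon(X)\ge (\log N)/L$ from $N$ generators of length $\le L$ handles it cleanly.
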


The first two statements in Theorem \ref{thmmain3} (uniqueness of the measure $\mu$ and its $\cG$-Bernoulli property) were proven in \cite{kucherenko_ergodic_2024}
under the assumption of $\hcon(X)\geq\hres(X)$. Here we provide the general result. But it is important to note that the main novelty of Theorem \ref{thmmain3} is the computability of $\mu$ based on the assumption of having oracle access to $\cG$ and $b(\cG)$.

Next, we apply our results to several well-known classes of shift spaces including $S$-gap shifts, generalized gap shifts, and $\beta$-shifts (see Section \ref{examples} for definitions and details):

\begin{corollary}
    Let $\bt>1$ be a non-integral real number and let $X_\bt$ be the $\bt$-shift together the standard generating set $\cG=\cG(\bt)$  defined in Section \ref{beta_generators}. Then  the unique MME $\mu_\beta$ of $X_\bt$ is  computable based on oracle access to $\cG$.
\end{corollary}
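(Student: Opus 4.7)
The plan is to reduce the claim to Theorem~\ref{thmmain2}. First I would verify its structural hypotheses for $X_\beta$ together with the standard generating set $\cG = \cG(\beta)$: that $X_\beta = X(\cG)$ is a coded shift with $\cG$ uniquely representing $\Xcon$, and that $\hcon(X_\beta) > \hres(X_\beta)$. The first two properties are built into the construction of $\cG(\beta)$ from the greedy $\beta$-expansion of $1$, while the entropy gap is a classical fact for $\beta$-shifts, together with $\htop(X_\beta) = \hcon(X_\beta) = \log \beta > 0$.

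The key remaining hypothesis is $\htop(X_\beta) > r(\cG)$. To verify this I would use the description of $\cG(\beta)$ derived from the greedy expansion $1 = \sum_{i\geq 1} c_i \beta^{-i}$ with digits $c_i \in \{0, 1, \dots, \lfloor \beta \rfloor\}$: the generators of length $k$ have the form $c_1 c_2 \cdots c_{k-1} j$ with $0 \leq j < c_k$, so that $|\cG_k| = c_k$. By greediness $c_k \leq c_1 = \lfloor \beta \rfloor$ for every $k \geq 1$, whence
\[
r(\cG) \;=\; \sup_{k \in \NN} \tfrac{1}{k} \log |\cG_k| \;\leq\; \log \lfloor \beta \rfloor \;<\; \log \beta \;=\; \htop(X_\beta),
\]
with the strict inequality coming from the non-integrality of $\beta$.

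Since the corollary only provides an oracle for $\cG$, I next explain how the remaining inputs of Theorem~\ref{thmmain2} are produced from this oracle. For the oracle of $\cL(X_\beta)$ one uses that $\cL(X) = \cL(\Xcon)$ for any coded shift, so enumerating finite concatenations of generators and listing their subwords in nondecreasing length order gives an oracle for $\cL(X_\beta)$; for $\beta$-shifts this is especially direct since the first few generators already yield $d := |\cG_1| = \lfloor \beta \rfloor$ and the digits $c_1, c_2, \dots$, so membership in $\cL(X_\beta)$ of words of any fixed length becomes decidable. For the rational $\varepsilon$, the same oracle locates in finite time the minimal $k \geq 2$ with $c_k \geq 1$, whose existence is guaranteed by non-integrality of $\beta$. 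The identity $\beta - d = \sum_{i\geq 2} c_i \beta^{1-i} \geq \beta^{1-k}$ together with $\beta \leq d+1$ then yields $\beta - d \geq (d+1)^{1-k}$, from which a valid rational $\varepsilon \in \bigl(0,\, \log \beta - \log d\bigr) \subseteq \bigl(0,\, \htop(X_\beta) - r(\cG)\bigr)$ is extracted using, say, $\log(1+x) \geq x/2$ for $x \in (0,1]$.

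With these ingredients assembled, Theorem~\ref{thmmain2} applies and yields the computability of the unique MME $\mu_\beta$. The main conceptual step is the bound $r(\cG) < \htop(X_\beta)$, which rests entirely on the greediness of the $\beta$-expansion; the remaining points are oracle book-keeping, with the only delicate item being to justify that an upper bound for $r(\cG)$ (and not merely a lower one) is computable --- something that here is transparent because $r(\cG) = \log \lfloor \beta \rfloor$ is read from the single integer $|\cG_1|$.
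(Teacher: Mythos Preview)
Your argument is correct, but it takes a different route from the paper's. You reduce to Theorem~\ref{thmmain2} by verifying $\htop(X_\beta)>r(\cG)$ via the sharp identity $r(\cG)=\log\lfloor\beta\rfloor$, then manufacture from the oracle of $\cG$ both an oracle of $\cL(X_\beta)$ (using the Parry lexicographic characterization) and an explicit rational $\varepsilon\in(0,\log\beta-\log\lfloor\beta\rfloor)$ (via a quantitative lower bound on $\beta-\lfloor\beta\rfloor$ read off from the first nonzero digit $c_k$, $k\geq 2$). The paper instead invokes Theorem~\ref{thmmain3}: it observes that $\cG(\beta)$ has bounded growth with $b(\cG)=|\cG_1|=\lfloor\beta\rfloor$, which is read from the very first generator, and applies part~(iii) directly. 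Both routes rest on the same underlying fact $|\cG_k|=\epsilon_k\leq\lfloor\beta\rfloor$, but Theorem~\ref{thmmain3} packages the consequences more efficiently here: it computes $\hcon(X)$ internally from the characteristic equation and derives the needed gap $\varepsilon$ on its own, so no separate construction of an $\cL(X_\beta)$-oracle or of a quantitative $\varepsilon$ is required. Your approach, on the other hand, makes these two ingredients fully explicit and shows concretely how they are extractable from $\cG$ alone, which is instructive.

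One small comment: your sentence ``enumerating finite concatenations of generators and listing their subwords in nondecreasing length order gives an oracle for $\cL(X_\beta)$'' is not a valid general argument, since for an arbitrary coded shift one cannot decide from a generator oracle when all length-$n$ words have been found. You immediately repair this for $\beta$-shifts via Parry's criterion, which is the right move, but the general-sounding first clause should be dropped or qualified.
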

We stress that in the above corollary besides the oracle of $\cG$ the algorithm also relies on specific properties of $\beta$-shifts.
Next we consider generalized gap shifts which are given by a collection of $d$ subsets of the natural numbers $S_0,\dots,S_{d-1}$ and a permutation set $\Pi$. The case $d=1$ reduces to the  case of the $S$-gap shift. We refer to Section 6.2 and \cite{burr_computability_2024, kucherenko_ergodic_2024} for more details.
\begin{corollary}
    Let $X$ be a generalized gap shift given by sets  $S_0,\dots,S_{d-1}\subset \bN$ and a permutation set $\Pi.$ Then the unique MME $\mu_{\rm max}$ of $X$ and the Vere-Jones parameter $\kp$ associated with the standard generating set $\cG$  (see Section \ref{sec62}) are computable based on having oracle access to $S_1,\dots,S_{d-1},$ and $\Pi.$
\end{corollary}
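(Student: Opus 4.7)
The plan is to reduce the corollary to Theorem \ref{thmmain3} by verifying its hypotheses for the generalized gap shift setup described in Section \ref{sec62}. First I would read off from that description that each generator in the standard generating set $\cG$ corresponds to a permutation pattern $\pi\in\Pi$ together with a tuple of gap lengths drawn from the $S_j$'s. Two structural facts follow: $\cG$ uniquely represents $X_{\rm con}(\cG)$ via the canonical parsing of a concatenation into (permutation pattern, gap block) pieces, and $\cG$ has bounded growth rate, since $|\cG_k|$ is uniformly bounded by a constant $b(\cG)$ depending only on $|\Pi|$ and $d$. The bound $b(\cG)$ itself can be computed directly from $\Pi$ and $d$, producing an oracle for $b(\cG)$ in the sense required by Theorem \ref{thmmain3}(iii).

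Next I would convert oracle access to $(S_1,\ldots,S_{d-1},\Pi)$ into oracle access to $\cG$ and to the language $\cL(X)$. To enumerate $\cG$ in nondecreasing length order, one iterates over $k$ and lists all admissible tuples $(s_1,\ldots,s_{d-1},\pi)$ whose associated generator has length $k$; the bound $|\cG_k|\le b(\cG)$ guarantees that each stage halts in finite time. For $\cL(X)$ one enumerates all subwords of concatenations of finitely many generators: since $X_{\rm con}$ is dense in $X$, these subwords exhaust $\cL(X)$ at every finite length, yielding the required length-ordered enumeration. Thus both oracles needed by Theorem \ref{thmmain3} are produced from the given data.

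Finally, I would verify that $h_{\rm con}(X)>h_{\rm res}(X)$, which upgrades the conclusion of Theorem \ref{thmmain3} to the uniqueness of $\mu_{\rm max}$ on all of $X$ via the results of \cite{kucherenko_ergodic_2024}. For generalized gap shifts the residual set consists of sequences that, on at least one side, concatenate infinitely many generators and on the other side fail to do so in a highly constrained (essentially one-parameter) way, so $\hres(X)=0<\hcon(X)=h_{\rm top}(X)$. Theorem \ref{thmmain3} then yields that $\kp$ is $\cG$-computable and that the entropy-maximizing measure on $X_{\rm con}$, which by uniqueness coincides with $\mu_{\rm max}$, is computable from the oracle for $\cG$; by the previous paragraph this translates to computability from $(S_1,\ldots,S_{d-1},\Pi)$. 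The main obstacle I foresee is the careful verification of $\hres(X)=0$ in the full multiple-gap setting with a nontrivial permutation set $\Pi$: this is folklore for $S$-gap shifts, but the general case requires a short dedicated argument describing the structure of residual points from the permutation data.
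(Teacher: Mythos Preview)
Your reduction to Theorem~\ref{thmmain3} rests on the claim that the standard generating set $\cG$ of a generalized gap shift has bounded growth rate, i.e.\ that $|\cG_k|$ is bounded by a constant depending only on $|\Pi|$ and $d$. This is false as soon as $d\ge 2$. A generator $\pi(0)^{s_{\pi(0)}}\cdots\pi(d-1)^{s_{\pi(d-1)}}d$ has length $1+\sum_j s_j$, and for a fixed length $n$ the number of admissible tuples $(s_0,\dots,s_{d-1})$ with $\sum_j s_j=n-1$ can grow polynomially in $n$ (of order $n^{d-1}$ when the $S_j$ are large). So $b(\cG)=\infty$ in general and Theorem~\ref{thmmain3}(iii) does not apply. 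Your argument goes through only in the $S$-gap case $d=1$.

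The paper sidesteps this by invoking Theorem~\ref{thmmain2} rather than Theorem~\ref{thmmain3}. The key point is that although $|\cG_n|$ is unbounded, it satisfies the polynomial estimate $|\cG_n|\le d!\,n^{d-1}$, so $\tfrac{1}{n}\log|\cG_n|\to 0$ and hence $r(\cG)<h_{\rm top}(X)$. One then computes an explicit rational $\epsilon$ with $0<\epsilon<h_{\rm top}(X)-r(\cG)$ by handling large $n$ via the decaying bound and small $n$ by direct comparison with a sufficiently precise approximation of $h_{\rm top}(X)$. With this $\epsilon$ in hand, Theorem~\ref{thmmain2} yields both the $\cG$-computability of $\kappa$ and the computability of $\mu_{\rm max}$. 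The remaining ingredients in your outline---constructing oracles for $\cG$ and $\cL(X)$ from $(S_0,\dots,S_{d-1},\Pi)$, and the fact that $h_{\rm res}(X)=0$---are handled as you indicate, with the paper citing \cite{burr_computability_2022} for unique representability and for $h_{\rm res}(X)=0$.
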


% \begin{corollary}
%     Let $X=X(\cG)$ be a coded shift with generating set $\cG$ such that $\cG$ uniquely represents $\Xcon$ with $\hcon(X)>\hres(X)$ and suppose that $\kp$ is computable. Then the unique measure of maximal entropy $\mumax$ is computable based on having oracle access to $\cG$ and $\cL(X)$. In particular, if $X$ is either an $S$-gap shift, a multiple gap shift, or a $\beta$-shift, then $\mumax$ is computable.
% \end{corollary}

We note that  the computability of the Vere-Jones parameter $\kp$ for $S$-gap shifts, multiple gap shifts, and $\beta$-shifts is immediate since the number of generators with the same length is either at most one (for the $\beta$-shift and $S$-gap shift) or the constant $\epsilon$ in Theorem \ref{thmmain2} can be derived from the input data.  Next we combine Theorem \ref{thmmain3} with results in \cite{kucherenko_ergodic_2024} to derive computability results for the measures of maximal entropy for coded shifts with non-unique measures of maximal entropy. Specifically, we consider the well-known Dyke shift and show that each of the two ergodic measures of maximal entropy is computable.
\begin{corollary}
Let $X$ be the standard Dyke shift. Then both ergodic measures of maximal entropy are computable.
 \end{corollary}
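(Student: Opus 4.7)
The plan is to apply Theorem \ref{thmmain3} to two coded-shift presentations of the standard Dyck shift $X$, each adapted to one of the two ergodic measures of maximal entropy. First I would recall from Krieger \cite{krieger_uniqueness_1974} that $X$ admits exactly two ergodic MMEs $\mu_+$ and $\mu_-$, and that they are interchanged by the natural involution $\tau\colon X\to X$ obtained by composing time reversal with the swap of opening and closing brackets.

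The key structural input is the description of $\mu_\pm$ from \cite{kucherenko_ergodic_2024}, which I would invoke to obtain for each sign a generating set $\cG_\pm$ of $X$ with the following three properties: $\cG_\pm$ uniquely represents $X_{\rm con}(\cG_\pm)$; the measure $\mu_\pm$ is supported on $X_{\rm con}(\cG_\pm)$ and attains $h_\sigma(\mu_\pm)=\htop(X)$, so that $\mu_\pm$ is in particular an entropy-maximizing measure on $X_{\rm con}(\cG_\pm)$; and $\cG_\pm$ has bounded growth rate with a computable bound $b(\cG_\pm)$, since its generators are finite-type blocks of a fixed bracket ``polarity'' whose lengthwise counts are controlled by the size of the Dyck alphabet. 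An oracle for $\cG_\pm$ is then produced by enumerating words in the Dyck language in order of length and testing the generator-membership condition, which is algorithmically checkable. Parts (i) and (ii) of Theorem \ref{thmmain3} applied to each of $\cG_+$ and $\cG_-$ confirm that $\mu_\pm$ is the unique invariant entropy-maximizing measure on $X_{\rm con}(\cG_\pm)$ and is $\cG_\pm$-Bernoulli with weights $p_g=\exp(-|g|\htop(X))$; part (iii) then yields the $\cG_\pm$-computability of the Vere-Jones parameter $\kappa(\cG_\pm)$ together with the computability of $\mu_\pm$ from oracle access to $\cG_\pm$.

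The main obstacle is identifying and verifying the correct generating sets $\cG_\pm$. Several natural candidates fail: the set of all balanced Dyck words returning to zero height has Catalan growth and is therefore not bounded, and under that presentation both Krieger measures sit on the residual set rather than on the concatenation set, placing them outside the direct scope of Theorem \ref{thmmain3}. Granting the specialized presentations from \cite{kucherenko_ergodic_2024}, the rest of the argument is mechanical. As a cleaner alternative, I would compute only $\mu_+$ in this way and then obtain $\mu_-=\tau_\ast\mu_+$ directly from the explicit computable action of $\tau$ on cylinders, bypassing a second application of Theorem \ref{thmmain3}.
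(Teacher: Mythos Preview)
Your plan contains a real gap. The generating sets $\cG^{(1)}=\cG\cup\{(,[\}$ and $\cG^{(2)}=\cG\cup\{),]\}$ from \cite{kucherenko_ergodic_2024} do \emph{not} have bounded growth rate: each of them contains the canonical set $\cG=\bigcup_n W_n$ of prime balanced Dyck words, and $|W_n|$ grows exponentially (Catalan-type growth with two bracket types). You actually point this out yourself when you reject ``the set of all balanced Dyck words returning to zero height'' for having Catalan growth, but then overlook that this very set is a subset of $\cG^{(i)}$. Your justification that the generators are ``finite-type blocks of a fixed bracket polarity whose lengthwise counts are controlled by the size of the Dyck alphabet'' does not correspond to any presentation in \cite{kucherenko_ergodic_2024}, and no bounded-growth generating set with the required properties is produced there. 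Consequently Theorem~\ref{thmmain3}(iii) is not applicable, and your argument for computability of $\kappa(\cG_\pm)$ breaks down.

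The paper sidesteps this obstacle entirely. It quotes from \cite{kucherenko_ergodic_2024} not only the presentations $\cG^{(i)}$ and the fact that $\mu_i$ is $\cG^{(i)}$-Bernoulli, but also the explicit values $p_g=3^{-|g|}$ and $\kappa=2$. With $\kappa$ known exactly and $(p_g)_g$ trivially uniformly computable (since $\htop(X)=\log 3$ is known), Theorem~\ref{thmmain} applies directly; no growth hypothesis is needed. The paper also remarks that one can alternatively compute an explicit $\varepsilon$ with $0<\varepsilon<h_{\rm con}(X)-r(\cG^{(i)})$ from the estimates in \cite{kucherenko_ergodic_2024} and invoke Lemma~\ref{c_computable}, again avoiding the bounded-growth route. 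Your involution shortcut for deducing computability of $\mu_-$ from $\mu_+$ is fine but orthogonal to the issue.
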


We recall that for $\hres(X)\geq \hcon(X)$ the measure of maximal entropy is, in general, not unique, see \cite{kucherenko_ergodic_2024}. Nevertheless, there are many examples of a coded shift $X(\cG)$ for which the inequality $\hres(X)\geq \hcon(X)$ holds but the MME is still unique. However, even under the uniqueness assumption of the MME and the computability of the Vere-Jones parameter, the MME is, in general, not computable.

\begin{thmx}\label{thmnoncomput}
There exist a coded shift  $X=X(\cG)$  with generating set $\cG$ such that $\cG$ uniquely represents $\Xcon$ with the following properties:
\begin{enumerate}
    \item[(i)]$\hres(X)>\hcon(X)$;
     \item[(ii)] $X$ has a unique MME $\mumax$;
     \item[(iii)] The generating set $\cG$ has bounded growth rate, i.e., $b(\cG)<\infty$;
     \item[(iv)]The Vere-Jones parameter $\kp$ is computable from having oracle access to $\cG$ and a rational upper bound of $b(\cG)$;
      \item[(v)]The unique MME $\mumax$ is not computable from oracles of $\cG, \cL(X)$ and $b(\cG)$.
\end{enumerate}
\end{thmx}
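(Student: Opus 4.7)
The plan is to construct $X$ by encoding a non-recursive recursively-enumerable set --- for definiteness the halting set $H$ of a fixed universal Turing machine $U$ --- into the structure of $\Xres$ in such a way that (i)--(iv) are automatic from the construction, while $\mumax$ acquires enough of $H$ to fail to be computable from the stated oracles. The starting point is to fix a base coded shift $Y=Y(\cH)$ of bounded growth rate with $\hres(Y)>\hcon(Y)$, a unique MME $\nu$, and $\nu$ carried by a countable-state Markov subsystem $Z\subseteq Y_{\rm res}$ of the kind studied in \cite{kucherenko_ergodic_2024}. The crucial feature of this setup is that the MME of a positively recurrent countable-state Markov shift is the Gurevich measure, and its cylinder values are explicit functions of the Perron eigendata of the transition matrix; these eigendata can be non-computable even when the matrix itself is given effectively.

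Next I would perturb $\cH$ to a new generating set $\cG$ that encodes $H$ at the level of those eigendata. Concretely, for each $n\in\NN$ I adjoin or suppress certain labeling words of prescribed length $L(n)$, depending on whether $U$ halts on input $n$ by a growing stage $\tau(n)$. The labels are chosen so that the number of added generators of each length stays uniformly bounded, preserving $b(\cG)<\infty$ (property (iii)); their contribution to the concatenation entropy is negligible, so $\hcon(X)=\hcon(Y)$ and hence (i) survives; the perturbation only refines the transition graph of $Z$ without creating new recurrent classes, so uniqueness of the MME (property (ii)) is inherited; and the label positions become detectable in designated cylinders $C_n$ whose $\mumax$-measure decodes $\mathbf 1_H(n)$.

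For (iv), bounded growth yields
\[
\kp=\sum_{k=1}^\infty |\cG_k|\,k\,e^{-k\hcon(X)}\le b(\cG)\sum_{k=1}^\infty k\,e^{-k\hcon(X)},
\]
so, given an oracle for $\cG$ and a rational upper bound $B\ge b(\cG)$, one first computes $\hcon(X)$ from the bounded-growth entropy data determined by $\cG$ alone and then approximates $\kp$ with effective tail control. For (v) I would argue by contradiction: if $\mumax$ were computable from oracles of $\cG$, $\cL(X)$, and $B$, then for each $n$ the cylinder $C_n$ constructed above would satisfy $\mumax(C_n)\in(a_n,b_n]$ iff $n\in H$ for effective rationals $a_n<b_n$, yielding a decision procedure for $H$ and contradicting its undecidability.

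The hard part is the perturbation step: engineering the labels so that the four competing requirements --- bounded growth, entropy preservation, uniqueness of the MME, and effective decoding by $\mumax$ --- hold simultaneously. The tight point is that the labels must sit inside a single recurrent class of the countable-state subsystem (to keep the MME unique and ergodic) while still producing detectable differences in the Gurevich eigenvector at the label states (to enable decoding from the MME). A workable implementation is to attach the labels as decorations branching off a fixed backbone of $Z$, with lengths $L(n)$ chosen large enough that the geometric weight $e^{-L(n)\hcon(X)}$ dominates the combinatorial count; explicit generating-function identities for the eigenvector along the backbone then convert the presence or absence of a label at stage $n$ into a controlled, effectively bracketed shift in $\mumax(C_n)$, which is what powers the reduction to $H$ in the last step.
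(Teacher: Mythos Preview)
Your strategy is fundamentally different from the paper's, and as written it has a real gap. The countable-state Markov structure in \cite{kucherenko_ergodic_2024} lives on $\Xcon$ (via the induced system on the set of points where a generator begins), not on $\Xres$; invariant measures giving full mass to $\Xres$ are carried by $\Xlim$, which in concrete constructions is a finite union of ordinary subshifts appearing as limits of long generators, and the MME is simply the MME of whichever of those subshifts has largest entropy. There is no Perron/Gurevich eigenvector on $\Xres$ to perturb, so the mechanism you propose for decoding $H$ from $\mumax(C_n)$ has no carrier in this framework. The ``hard part'' you flag at the end is in fact the entire construction: nothing in the sketch specifies the base shift $Y$, the subsystem $Z$, the labels, or why uniqueness of the MME survives the perturbation, and the picture of $\Xres$ on which the whole plan rests does not match how residual measures actually arise for coded shifts.

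The paper's argument is much more elementary and involves no undecidability encoding. It builds $X$ so that $\Xlim$ contains two disjoint transitive subshifts $X^{(1)},X^{(2)}$ on separate sub-alphabets with $\htop(X^{(1)})>\htop(X^{(2)})>\hcon(X)$; the unique MME of $X$ is then the MME $\mu_1$ of $X^{(1)}$. The presence of $X^{(1)}$ in $\Xlim$ is produced by a sparse sequence of generators that can be truncated at any finite stage: for every $N,K$ one drops all $X^{(1)}$-type generators beyond a suitable cutoff to obtain a twin $Y$ with the same language up to length $N$ and the same generators up to length $K$, but now $X^{(1)}\cap Y=\emptyset$ and the MME of $Y$ is $\mu_2\neq\mu_1$. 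Any oracle Turing machine queries only finitely many entries and hence cannot separate $X$ from $Y$, giving (v). Note that in this construction $\mu_1$ itself can be perfectly computable (take $X^{(1)}$ to be the full shift on $\{3,4\}$); what fails is the computability of the map from the oracles to the MME, whereas your plan was aiming to make $\mumax$ non-computable as an individual object.
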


\subsection*{Outline of the paper} 
In Section \ref{definitions} we review relevant background material and definitions from computability theory and from ergodic theory for coded shift spaces. Section \ref{sec:computability2} is devoted to the proofs of Theorems \ref{thmmain} and \ref{thmmain2}.  Theorem \ref{thmmain3} is proven in Section \ref{sec:proof_non_computability}. In Section \ref{sec:parameter} we construct a rather general example for the non-computability of the Vere-Jones parameter. In Section \ref{examples}, we apply our main theorems to prominent classes of coded shifts, including S-gap shifts, generalized S-gap shifts, Beta-shifts and the Dyke shift.

%%%%%%%%%%%%%%%%%%%%%%%%%%%%%%%%%%%%%%%%%%%%%%%%%%%%%%%%%%%%%%%%%%
\section{Background and Definitions}\label{definitions}

\subsection{Basics from computability theory}\label{sec:computability}
In this section we introduce some basic concepts in computability theory that we use in this paper.
To simplify the exposition we start with the computability of real numbers.

\begin{definition}\label{sec:def_computability}
Let  $x\in \bR$. An \emph{oracle} of $x$ is a function $\phi:\bN\to \bQ$ such that $\vert \phi(n)-x\vert < 2^{-n}$. Moreover, we say $x$ is computable
if there is a Turing Machine $T=T(n)$ 
which is an oracle of $x$.
\end{definition} 
For our purposes it is enough to think of Turing machines as an algorithm that takes a finite number of inputs and carries out a finite number of computations to produce an output. For more details on Turing machines and computable analysis we refer to \cite{weihrauch_computable_2000}.

It is straightforward to see that rational numbers, algebraic numbers, and some transcendental numbers such as ${ e}$ and $\pi$ are computable real numbers. However, since the collection of Turing machines is countable, most points in $\bR$ are not computable.

Next, we define computable functions on the real line.
\begin{definition}\label{defcompfuncR} Let $D\subset \bR$. A function $f:D\rightarrow\bR$ is {\em computable} if there is a Turing machine $T$ so that for any $x\in D$, any oracle $\phi$ for $x$ and any $n\in \bN, \,\, T(\phi,n)$
is a rational number so that $\vert T(\phi,n)-f(x)\vert<2^{-n}$.
\end{definition}
We observe that one of the inputs of the Turing machine $T$ in Definition \ref{defcompfuncR} is an oracle. Specifically, while the Turing machine $T$ in principle has access to an infinite amount of data, it must be able to decide when the approximation $\phi(m)$ of $x$  is sufficiently accurate to perform the computation of $f(x)$ to precision $2^{-n}$.
We further note that in Definition \ref{defcompfuncR}, the input points $x$ are not required to be computable. In fact, any set  $D\subset \bR$ can be the domain of a computable function $f$. We finally point out that computable functions must necessarily be continuous but the converse is not true in general.

Next, we extend the notion of computable points to more general spaces, called computable metric spaces.

\begin{definition}\label{defcompmetric}
    Let $(X,d)$ be a separable complete metric space and let $\cS=\{s_i: i \in \bN\}$ be dense in $X.$  We say that $(X,d,\cS)$ is a computable metric space if there exists a Turing machine $\phi\colon\NN^2\times\NN\to\QQ$ such that $|\phi(i,j,n)-d(s_i,s_j)|<2^{-n}$.  
    We call $\cS$ the set of ideal points of $X$. 
    is an oracle of $x\in X$ if $d(x,s_{\phi(n)})<2^{-n}.$ Moreover,
 $x\in X$ is computable if there exists a Turing machine $T=T(n)$ which is an oracle of $x$
\end{definition}

Roughly speaking, in the above definition, the Turing machine $\phi$ is able to approximate distances between ideal points at any given accuracy. We also say that $\phi$ uniformly computes $d$ on $\cS\times \cS$.  We can recover from Definition \ref{defcompmetric} the definition of computable real numbers (see Definition \ref{sec:def_computability}) by defining $X=\RR$ with the standard metric and selection $\cS$ to be a enumeration of $\QQ.$ 
Next, we extend the notion of computable functions to functions between computable metric spaces as follows.

\begin{definition}\label{def:computablefunction}
Let $(X,d_X,\cS_X)$ and $(Y,d_Y,\cS_Y)$ be computable metric spaces and  $\cS_Y=\{t_i: i\in \bN\}$.  Let $D\subset X$.  A function $f:D\rightarrow Y$ is {\em computable} if there is a Turing machine $T$ such that for any  $x\in D$ and any oracle $\phi$ of $x$, the output $T(\phi,n)$ is a natural number satisfying $d_Y(t_{T(\phi,n)},f(x))< 2^{-n}$.

\end{definition}Analogously to the case of real-valued functions, computable functions are continuous. %We leave the elementary proof of this fact as an exercise.

Let $(X,d,\cS)$ be a computable metric space. We are interested in the space of all Borel probability measures on $X$ endowed with the weak$^\ast$ topology. We denote this space by $\cM(X).$ There is a natural way to define a computable metric space structure on $\cM(X)$, see \cite{hoyrup_computability_2009}. Namely, we consider probability measures supported on a finite subset of $\cS$ and having rational weights, i.e., measures of the form 
\[
\sum_{i\in I} q_i\delta_{s_i},
\]
where $I$ is a finite subset of $\NN$, $\delta_{s_i}$ is the Dirac measure on $s_i$  and $\{q_i\colon i\in I\}$ is a set of positive rationals that sum up to $1.$ Measures of this form constitute the set of ideal points in $\cM(X),$ which we denote by $\cS_{\cM(X)}.$ From now on we assume  that $X$ is bounded. Recall that the topological space $\cM(X)$ is metrizable. One metric which is compatible with the weak$^\ast$ topology is the Wasserstein-Kantorovich metric.

\begin{definition}\label{defWK}
    The Wasserstein-Kantorovich metric $W_1$ on $\cM(X)$ is defined by
    
    \[
    W_1(\mu, \nu)=\sup_{f\in\text{1-Lip}}\left|\int f d\mu -\int fd\nu\right|,
    \]
where $\text{1-Lip}$ denotes the set of Lipschitz functions $f\colon X\to\RR$ with Lipschitz constant $1.$
\end{definition}
The metric $W_1$ belongs to a family of metrics $W_p$ which are interesting in their own right. They are a natural class of metrics to work with in the context of optimal mass transport and have found broad applicability; see, e.g., \cite{panaretos}.
We note that if $X$ is bounded then $(\cM(X),\cS_{\cM(x)}, W_1)$ is a computable metric space \cite{hoyrup_computability_2009}. This allows us to define computable measures in $\cM(X)$.
\begin{definition}\label{defmui}
    Fix an enumeration $\{\mu_i: i\in\NN\}$ of the ideal measures $\cS_{\cM(X).}$ A measure $\mu\in\cM(X)$ is computable if there exists a Turing machine $\chi\colon\NN\to\NN$ such that $W_1(\mu,\mu_{\chi(n)})<2^{-n}.$
\end{definition}
We will use this definition to consider the computability of the measures of maximal entropy on coded shift spaces.
\subsection{Coded shifts}\label{coded shifts}
We continue to use the notation from Section 1.2.
For a  finite alphabet $\cA=\{0,\dots,d-1 \}$ we consider the product space $\Sigma_d=\cA^{\ZZ}$ which we call the full shift in $d$ symbols. Endowing $\Sigma_d$ with the Tychonoff product topology  makes it a compact metrizable topological space. For example, the (standard) metric given by
%\marginpar{\color{red} page 7, comment 2}
\begin{equation}\label{defmetX}
d(x,y)=  2^{-\min\{|k| \;:\;  x_k\neq y_k\}} 
\end{equation}
induces the Tychonoff product topology on $X$.
Let $\sigma\colon \Sigma_d\to \Sigma_d$ be the left shift map which is a homeomorphism.  A shift space (also called a subshift) is a closed shift invariant subset $X$ of $\Sigma_d$. A shift space $X$ is  coded  if there exists a countable set $\cG$ (called a generating set) of words (i.e. elements in $\cA^*=\bigcup_{n=1}^{\infty}\cA^n$) such that $X$ is the smallest shift space containing the set of all free concatenations of  elements of $\cG$. Precisely, $X$ is the closure of the concatenation set defined by 
\[
\Xcon=\{\dots g_{-1}g_{0}g_{1}\dots \colon g_{i}\in\cG \}.
\]
We recall the definition of the residual set $\Xres = X\backslash\Xcon$. We note that both the concatenation set and the residual set depend on the generating set $\cG$ and it is possible that different generating sets produce the same coded shift.
Coded shifts are quite general and include transitive sofic and almost sofic shifts \cite{lind_introduction_2021}, as well as many familiar examples such as $S$-gap shifts, $\beta$-shifts, and various generalizations including generalized gap shifts and intermediate $\beta$-shifts.
We define the {\bf language} of a shift space $X$ by 
\[
\cL(X)= \bigcup_{n\ge 0} \cL_n(X),
\]
where $\cL_0=\emptyset$ and for $n\geq 1$,
\[
\cL_n(X) = \{w_1\dots w_n\in \cA^n \colon \exists x\in X\,\, {\rm with }\,\, x_1=w_1, \dots,x_n = w_n \}.
\]
Given a word $w\in \cA^n$, we say $w$ has length $n$  which we denote  by $|w|.$ We denote by $|S|$ the cardinality of a set $S$.

Central to our settings are the concepts of topological and measure-theoretic entropy, which we briefly define in the special case of shift spaces. For more general definitions and properties see, for example, \cite{viana_foundations_2016}. The {\bf topological entropy} of $X$ is defined by
\[
\htop(X) = \lim_{n\to \infty}\frac{1}{n}\log |\cL_n(X)|.
\]
%One simple interpretation of $\htop(X)$ is the exponential growth rate of $|\cL_n(X)|.$ 
Notice that $|\cL_n(X)|\leq d^n$ and, as a consequence of the sub-additivity of $\log |\cL_n(X)|$, $|\cL_n(X)|\ge \exp(n\htop(X))$. %For a general dynamical system $e^{n\htop(X)}$ can be interpreted as the number of trajectories of length $n$ seen by an observer who can only discern trajectories that are separated by a given margin of error.

For the measure-theoretic counterpart of the topological entropy we denote by $\cM_\sigma(X)$ the subset of $\cM(X)$ consisting of those measures which are invariant under the shift map; i.e., that satisfy $\mu\circ\sigma^{-1}=\mu.$ The {\bf measure-theoretic entropy} of  $\mu\in \cM_\sigma(X)$ is defined by

\[
h_{\mu}(\sigma) = \lim_{n\to\infty}-\frac{1}{n}\sum_{w\in \cL_n(X)}\mu([w])\log\mu([w]),
\]
where $[w]:=\{x\in X\colon x_i=w_i, 0\le i\le |w|-1 \}$ is the (standard) cylinder set of $w.$ The topological and measure-theoretic entropies are related by the following variational principle:
\[
\htop(X)=\sup\{h_\mu(\sigma)\colon \mu\in\cM_\sigma(X)\}.
\]
An invariant measure that achieves the supremum above is called a {\bf measure of maximal entropy} (or MME). If the MME is unque we denote it by $\mumax.$ It is well known that every shift space has an MME, but it may not be unique. In case of coded shifts, uniqueness of the MME was proven in \cite{kucherenko_ergodic_2024} in the case $\hcon(X)>\hres(X),$ where $\hcon(X)$ and $\hres(X)$ are defined as in equations \eqref{eq1} and \eqref{eq2}, respectively.
Furthermore, it is also shown that $\mumax$ is {\bf $\cG$-Bernoulli}; that is, there exist positive numbers $p_g, g\in \cG$ with

$$\sum_{g\in \cG}p_g =1\text{ and } c:=\sum_{g\in\cG}|g|\exp(-|g|\htop(X))<\infty,$$
such that
\[
\mumax\left( \llbracket g_0 \ldots g_k \rrbracket\right)=\frac{1}{c}p_{g_0} \cdots p_{g_k}.
\]
Here  $\llbracket g_0 \ldots g_k\rrbracket$ is the {\bf $\cG$-cylinder} defined by
\[
\llbracket g_0 \ldots g_k\rrbracket = \{x\in \Xcon\colon x=\dots g'_{-1}.g'_0g'_1\dots, g'_i\in\cG \text{ and } g'_j=g_j \text{ for } 0\le j\le k. \}
\]
A crucial property of $\cG$ needed for our results (and for the results in \cite{kucherenko_ergodic_2024}) is that $\cG$ {\bf uniquely represents} $X_{\rm con}$. The generating set $\cG$ uniquely represent $X_{\rm con}$ (or $\cG$ is uniquely decomposable in \cite{pavlov_entropy_2020}) if for every  $x\in\Xcon$ there is a unique sequence $(g_i)_{i\in \bZ}$ in $\cG$ such that $x=\cdots g_{-1}g_{0}g_{1}\cdots.$ Since  every coded shift has a generating set that uniquely represents $X_{\rm con}$ \cite{beal_unambiguously_2024}, we work under the standing assumption that our generating set $\cG$ uniquely represents $X_{\rm con}$. % It has recently been shown that every coded shift has a generatoring set that uniquely represent $X_{\rm con}$ \cite{beal_unambiguously_2024}. The proof in \cite{beal_unambiguously_2024} includes a constructive algorithm to compute such a generating set based on the input of any other generating set.

It is shown in \cite{kucherenko_ergodic_2024} that the uniqueness of the MME fails in general when $\hres(X)\ge \hcon(X)$. In fact, \cite{kucherenko_ergodic_2024} provides several examples, as well as a constructive method for producing coded shifts with multiple MMEs in the case $\hres(X)\ge \hcon(X).$ However, we will show next that if a measure in 
\[
\cM_{\rm con}(X) := \left\{\mu\in\cM_{\sg}(X) \colon \mu(\Xcon) = 1 \right\}
\]
has metric entropy equal to $\hcon(X)$, then this measure is unique among those in $\cM_{\rm con}(X)$ and is $\cG$-Bernoulli.

\begin{proposition}\label{A1}
    Suppose $\cG$ uniquely represents $\Xcon$ and that there exists $\mu\in\cM_{\rm con}(X)$ such that $h_{\sg}(\mu) = \hcon(X).$ Then $\mu$ is the unique entropy maximizing measure among those in $\cM_{\rm con}(X)$ and it is a $\cG$-Bernoulli measure with $$p_g = \exp(-|g|\hcon(X))\text{ and }c = \sum_{g\in\cG}|g|\exp(-|g|\hcon(X)).$$ 
\end{proposition}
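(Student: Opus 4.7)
The plan is to reduce the problem to a variational analysis on the full shift over the countable alphabet $\cG$ via an inducing construction. Since $\cG$ uniquely represents $\Xcon$, every $x\in\Xcon$ admits a unique decomposition $x=\cdots g_{i_{-1}}.g_{i_0}g_{i_1}\cdots$, and I would let $Y\subset\Xcon$ denote the set of those $x$ at which a generator starts at coordinate $0$. The first-return time of $\sigma$ to $Y$ is $|g_{i_0}|$, and the induced map $\hat\sigma$ on $Y$ is measurably conjugate to the full shift $\tilde\sigma$ on $\cG^{\bZ}$ via the obvious block coding. Given the assumed $\mu\in\cM_{\rm con}(X)$ with $h_\sigma(\mu)=\hcon(X)$, I would pass to the normalized induced measure $\hat\mu=\mu|_Y/\mu(Y)$ on $\cG^{\bZ}$.

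Next I would invoke Abramov's formula and Kac's lemma to write
$$h_\sigma(\mu)=\mu(Y)\,h_{\hat\sigma}(\hat\mu), \qquad \frac{1}{\mu(Y)}=c(p):=\sum_{g\in\cG}|g|\,p_g,$$
where $p_g=\hat\mu(\llbracket g\rrbracket)$ is the one-coordinate projection of $\hat\mu$. The first-coordinate partition generates the Borel $\sigma$-algebra of $\cG^{\bZ}$ under $\tilde\sigma$, so the standard bound yields $h_{\hat\sigma}(\hat\mu)\le H(p):=-\sum_g p_g\log p_g$ with equality if and only if $\hat\mu$ is Bernoulli with weights $(p_g)$. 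Combining these gives $h_\sigma(\mu)\le H(p)/c(p)$ for every $\mu\in\cM_{\rm con}(X)$; conversely, each probability vector $p$ with $c(p)<\infty$ is realized by a $\cG$-Bernoulli measure of entropy $H(p)/c(p)$, so $\hcon(X)=\sup_p H(p)/c(p)$.

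I would then solve the maximization by Lagrange multipliers under the constraint $\sum_g p_g=1$: a direct computation shows that every interior critical point has the form $p_g=\exp(-\lambda|g|)$ for a $\lambda$ satisfying $\sum_g \exp(-\lambda|g|)=1$, and at such a point $H(p)/c(p)=\lambda$. Strict monotonicity of $\lambda\mapsto\sum_g\exp(-\lambda|g|)$ makes this $\lambda$ unique, and quasi-concavity of $H/c$ (a ratio of the concave $H$ and the linear $c$) identifies it as the global maximum, so $\lambda=\hcon(X)$ and $p_g=\exp(-|g|\hcon(X))$ with $c=\sum_g|g|\exp(-|g|\hcon(X))$. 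For the original $\mu$, every inequality in the chain must become equality, which forces $\hat\mu$ to be Bernoulli with precisely these weights; unwinding the inducing construction recovers the $\cG$-Bernoulli measure described in the statement. Both the weights and the induced Bernoulli measure being uniquely determined then yields uniqueness of $\mu$ within $\cM_{\rm con}(X)$.

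The main technical obstacle is the rigorous handling of the countable-state inducing procedure: verifying that $\mu(Y)>0$ (so that $\hat\mu$ is well defined), that Abramov's formula and Kac's lemma remain valid for the unbounded return-time function $|g_{i_0}|$, and that the inequality $h_{\hat\sigma}(\hat\mu)\le H(p)$ together with its equality characterization carries over to the countable-alphabet setting. Positivity of $\mu(Y)$ will follow from $\mu(\Xcon)=1$ combined with finiteness of the average length $c(p)$ at the maximizer, while the remaining points are standard once $H(p)$ is known to be finite along any entropy-maximizing sequence.
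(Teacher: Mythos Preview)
Your approach is correct and runs along the same inducing skeleton as the paper: both arguments pass to the first-return system on the set $E=Y$ of points in $\Xcon$ at which a generator begins at coordinate $0$, and both identify this induced system with the full shift over the countable alphabet $\cG$. The difference lies in how the variational problem on the induced system is solved. The paper casts the question as an equilibrium-state problem for the constant potential $\phi=-\hcon(X)$, notes that $P_{\rm con}(\phi)=0$ with $\mu$ an equilibrium state, transfers this via inducing to the potential $\phi_E(x)=-|g_{y_0}|\hcon(X)$ on the countable full shift, and then invokes the thermodynamic formalism for countable Markov shifts (Mauldin--Urba\'nski) as a black box to obtain uniqueness and the Gibbs form of the equilibrium state. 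You instead carry out the optimisation by hand: Abramov and Kac give $h_\sigma(\mu)\le H(p)/c(p)$, and you maximise this ratio directly via Lagrange multipliers and quasi-concavity. Your route is more elementary and self-contained, at the price of having to verify the equality case of $h_{\hat\sigma}(\hat\mu)\le H(p)$ and the global nature of the critical point explicitly; the paper's route is shorter but imports heavier machinery.

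One small remark: your justification that $\mu(Y)>0$ is phrased a bit circularly. It follows immediately from $\mu(\Xcon)=1$ and $\sigma$-invariance alone, since $\Xcon\subset\bigcup_{j\in\ZZ}\sigma^{-j}Y$ and $\mu(\sigma^{-j}Y)=\mu(Y)$, so $\mu(Y)=0$ would force $\mu(\Xcon)=0$; no appeal to finiteness of $c(p)$ is needed at that stage.
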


\begin{proof}
    The statement is proven in \cite{kucherenko_ergodic_2024} under the assumption $\hcon(X)>h_{\rm res}(X)$. The proof in the general case is analogous with one exception. We briefly review the idea of the proof in the $\hcon(X)>h_{\rm res}(X)$ case. The authors consider the topological pressure $P_{\rm top}(\phi)$ of the potential $\phi=-h_{\rm top}(X)$. Clearly, $P_{\rm top}(\phi)=0$ and $\mu\in \cM_\sigma(X)$ is an equilibrium state of $\phi$ if and only if $\mu$ is an MME of $X$. The uniqueness of $\mu$ and its identification as the corresponding $\cG$-Bernoulli measure is then established by inducing on the set \[
    E=\{x=\cdots g_{y_{-2}}g_{y_{-1}}.g_{y_{0}}g_{y_{1}}g_{y_{2}}\cdots\},\]
    i.e., the set of points $x\in X_{\rm con}$ with a generator starting at $x_0$.
    One can bijectively relate equilibrium states of $\phi$  with equilibrium states of $\phi_E\colon E\to\RR$ defined by
    \[
    \phi_E(x) = \sum_{j=0}^{|g_{y_0}|-1}\phi\circ\sg^j(x) = -|g_{y_0}|\hcon(X),
    \]
    Using that $E$ can be identified with the two-sided full shift over the alphabet $\bN$ and applying the theory of equilibrium states for countable Markov shifts, e.g. \cite{MU}, one can then deduce the desired result. We refer to \cite{kucherenko_ergodic_2024} for the detailed proof. 
    
    The proof in the general case, that is, without assuming $\hcon(X) > h_{\rm res}(X)$, for a measure $\mu \in \cM_{\rm con}(X)$ satisfying $h_\sigma(\mu) = \hcon(X)$ proceeds along the same lines, with one notable modification. Namely, instead of considering $P_{\rm top}(\phi)$ one has to consider the concatenation pressure $P_{\rm con}(\phi)$ on $X_{\rm con}$ defined
    by 
    \[P_{\rm con}(\phi)=\sup\left\{h_{\sigma}(\nu)+\int \phi\, d\nu: \nu\in \cM_{\rm con}(X) \right\},\]
    where $\phi=
    -\hcon(X)$. The concatenation pressure was introduced in \cite{burr_computability_2022}.
    Clearly, the measure $\mu$ satisfies $P_{\rm con}(\phi)=h_{\sigma}(\mu)+\int \phi\, d\mu=0$. After inducing on $E$ the remaining arguments of the proof are then analogous to the corresponding arguments of the proof for the $\hcon(X)>h_{\rm res}(X)$ case.
\end{proof}

\subsection{Computability from oracles of $\cL(X)$ and $\cG$}

When trying to establish computability in the context of coded shifts, a natural approach is to approximate a coded shift $X$ generated by $\cG$ by the coded shift generated by all $g\in\cG$ up to a certain finite length and to ask whether such a finitely generated subshift gives a qualitatively meaningful approximation of $X$. For this purpose we define

\[
\cG_n = \left\{g\in\cG \colon |g|=n \right\},
\]
and
\[
\cG_{\leq N} = \bigcup_{n\leq N}\cG_n.
\]

The following definitions provide a notion of computability when the language and/or the generating set of a coded shift act as inputs.

\begin{definition}\label{def:oracles_of_G_and_L(X)}
    We say that $\phi\colon\NN\to \cA^*$ is an oracle of $\cL(X)$ if $\phi(\NN)=\cL(X),$ and $\phi$ is non-decreasing in word length, i.e., $|\phi(n+1)|\ge |\phi(n)|$.
Suppose $\cG$ is an infinite generating set. We say that $\psi\colon\NN\to \cA^*$ is an oracle of $\cG$ if $\psi(\NN)=\cG,$ and $\psi$ is non-decreasing in word length. If $\psi$ is an oracle of $\cG$ we also write $g_i=\psi(i)$.
If $\cG$ is a finite generating set we substitute the domain $\NN$ with $\{1,\dots,|\cG| \}.$
\end{definition}

\begin{definition} Suppose we are given oracles of $\cG=(g_i)_{i\in\bN}$ and $(p_i)_{i\in \bN}$ such that $\sum_{i=1}^\infty p_i=1$ and $c=\sum_{i=1}^\infty |g_i|p_i<\infty$. We say that  $c$ is $\cG$-computable if  there exists a Turing machine $T(n)$ which on input $n \in \mathbb{N}$, outputs a positive integer satisfying $$\left|\sum_{i\leq T(n)}|g_i| p_i-c\right|<2^{-n}.$$
In particular, given an oracle of the concatenation entropy $\hcon(X)$, the Vere-Jones parameter $\kp=\kp(\cG)=\sum_{g\in \cG} |g| \exp(-|g|\hcon(X))$ is $\cG$-computable if there exists a Turing machine $T(n)$ such that $$\left|\sum_{i\leq T(n)}|g_i| \exp(-|g_i|\hcon(X))-\kp\right|<2^{-n}.$$
\end{definition}

We observe that if $\cG$ is finite then the Vere-Jones parameter is always $\cG$-computable from a given oracle of $\cG$. This is because for a finite generating set $\cG=\{g_1,\dots g_\ell\}$, we have $X=X_{\rm con}$ is sofic and $h_{\rm con}(X)= h_{\rm top}(X)$ can be computed from the (finite) characteristic equation $\sum_{i=1}^\ell \exp(h_{\rm con}(X))^{-|g_i|}=1. $ If $\cG$ is infinite then additional information is needed to compute $h_{\rm con}(X)$ and $\kp$.   
For example, if $h_{\rm con}(X)>h_{\rm res}(X)$ then $h_{\rm con}(X)$ can be computed from having oracle access to $\cG$ and $\cL(X)$, see \cite{burr_computability_2022}. While for some coded shifts it is possible to compute an oracle of $\cL(X)$ from an oracle of $\cG$, e.g., for $S$-gap shifts, this is in general not the case, see \cite{burr_computability_2022} for a more detailed discussion of this topic.

A weaker notion of computability is that of {\bf lower semi-computability}, which is the existence of a Turing machine that produces a non-decreasing sequence that converges to the object of interest. Note that such a Turing machine does not provide an error estimate. There is a corresponding notion of {\bf upper semi-computability}, and computability is equivalent to being simultaneous upper and lower semi-computability. To obtain computability from simultaneous upper and lower semi-computability it is enough to obtain an error bound using the difference of the upper and lower approximations. In the case of the Vere-Jones parameter it is easy to obtain lower semi-computability by computing the partial sum of the Vere-Jones parameter for all generators up to length $n$ at precision $2^{-n}$.

\begin{proposition}
    Let $\cG$ be a generating set. Then the Vere-Jones parameter $\kp=\kp(\cG)$ is lower semi-computable based on having oracle access to $\cG$ and $h_{\rm con}(X)$.
\end{proposition}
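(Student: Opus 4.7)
The plan is to produce a Turing machine which, on input $n$, outputs a rational number $L_n\in\QQ$ such that $L_n\le L_{n+1}\le \kp$ for all $n$ and $L_n\to \kp$. Since $\kp=\sum_{g\in\cG}|g|\exp(-|g|\hcon(X))$ is a sum of non-negative terms, the partial sums
\[
S_N=\sum_{i=1}^{N}|g_i|\exp(-|g_i|\hcon(X))
\]
form a non-decreasing sequence converging to $\kp$. So the task reduces to computing rational lower bounds of $S_N$ effectively.

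First, I would use the oracle of $\cG$ to enumerate the generators $g_1,g_2,\dots$ and read off their lengths $|g_i|$. Second, using the oracle $\phi$ for $\hcon(X)$, for any $m\in\NN$ I can compute a rational upper bound $u_m=\phi(m)+2^{-m}$ satisfying $\hcon(X)\le u_m$, and since $\exp$ is monotone, $\exp(-|g_i| u_m)\le \exp(-|g_i|\hcon(X))$. The exponential function is computable on a rational input, so I can produce a rational $e_i^{(m)}\in\QQ$ with $0\le e_i^{(m)}\le \exp(-|g_i| u_m)$ and $|e_i^{(m)}-\exp(-|g_i| u_m)|<2^{-m}$. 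This gives a rational lower bound $\ell_i^{(m)}:=\max(0,|g_i| e_i^{(m)}-|g_i|2^{-m})$ satisfying $\ell_i^{(m)}\le |g_i|\exp(-|g_i|\hcon(X))$ and $\ell_i^{(m)}\to |g_i|\exp(-|g_i|\hcon(X))$ as $m\to\infty$.

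On input $n$, I would set
\[
\tilde L_n=\sum_{i=1}^{n}\ell_i^{(n)},
\]
which is a rational lower bound of $S_n$ and hence of $\kp$. To guarantee monotonicity I then output $L_n=\max(L_{n-1},\tilde L_n)$ (with $L_0=0$); since each $\tilde L_n\le \kp$ we have $L_n\le \kp$ for all $n$, and by construction $L_n$ is non-decreasing. For convergence, fix $\epsilon>0$ and pick $N$ so that $S_N>\kp-\epsilon/2$. For $n\ge N$ sufficiently large the approximation error in $\tilde L_n$ compared to $S_N$ (which involves only finitely many terms) is less than $\epsilon/2$, so $\tilde L_n>\kp-\epsilon$, and therefore $L_n>\kp-\epsilon$. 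Thus $L_n\to \kp$.

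There is no substantive obstacle here; the argument is essentially routine once one recognizes that the non-negativity of the summands turns the problem into approximating a supremum of a computable sequence. The only point that requires mild care is guaranteeing a monotone output sequence even though the per-term approximations $\ell_i^{(m)}$ are not monotone in $m$, which is handled by the running-maximum trick. Note that producing an upper bound on $\kp$ would require genuinely more information (e.g.\ a bound on the tail $\sum_{i>N}|g_i|\exp(-|g_i|\hcon(X))$), which is precisely what prevents full computability in general and motivates the additional hypotheses in Theorems~\ref{thmmain2} and \ref{thmmain3}.
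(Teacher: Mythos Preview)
Your proof is correct and follows essentially the same approach as the paper: form non-decreasing partial sums of $\kp$, use the oracle for $\hcon(X)$ to compute rational lower bounds for each partial sum, and then take a running maximum to enforce monotonicity. Your write-up is somewhat more explicit about the mechanics (obtaining an upper bound $u_m$ for $\hcon(X)$ and using monotonicity of $\exp$), and the extra subtraction of $|g_i|2^{-m}$ in the definition of $\ell_i^{(m)}$ is redundant since $e_i^{(m)}\le \exp(-|g_i|u_m)$ already gives a lower bound, but this is harmless.
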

\begin{proof}
Given $N\in\bN$ we define $\kp_N=\sum_{k=1}^{N}|\cG_k|k\exp(-kh_{\rm con}(X))$ and observe that $(\kp_N)_N$ is an increasing sequence converging to $\kp$. Next, we use the oracle of $h_{\rm con}(X)$ to compute for each $N$ a real number $t_N\leq \kp_N$ satisfying
$\kp_N-t_N<1/N$. It follows that $(\max\{t_1,\dots,t_N\})_N$ is a non-decreasing computable sequence converging to $\kp$ from below. 
    \end{proof}

\section{Proofs of Computability results for $\cG$-Bernoulli measures and the MME}\label{sec:computability2}
In this section we prove Theorems \ref{thmmain}, \ref{thmmain2}, and \ref{thmmain3}. We start with two auxiliary results which are the basis of the proof of Theorem \ref{thmmain}. Let $X$ be a shift space given by an oracle of $\cL(X)$. Let $\mu\in \cM(X)$. Note that we do not assume for now that $\mu$ is shift-invariant. To work with a general measure we need the notation for general cylinders. For $w\in\cL(X)$ and $r,s\in\bZ$ such that $s-r+1=|w|$ we let
$$[w]_r^s=\{x\in X: x_r\cdots x_s=w\}.$$ In particular, if $k\in \NN$ and $w\in \cL_{2k+1}$ we call $[w]_{-k}^k$ a centered cylinder. 
We say a Turing machine $\chi\colon\cL(X)\times\bZ\times \NN \to \QQ$ computes $\mu$ on cylinders if for all $w\in \cL(X)$, $k\in \bZ$, and $n\in\NN$
    \begin{equation}\label{computcyl}
    \left|\mu( [w]_k^{k+|w|-1}) - \chi(w,k,n) \right| < 2^{-n}.
    \end{equation}
   %where $[w]_k=\sigma^{-k}([w])$ denotes the by $k$ increments shifted cylinder of $[w]$.
   If such a Turing machine $\chi$ exists we also say that $\mu$ is \emph{computable on cylinders}.
\begin{remark}\label{remcent}
    Note that computability of a measure on centered cylinders is equivalent to it being computable on all cylinders. Indeed,  given $v\in \cL(X)$, the non-centered cylinder $[v]_r^s$ can be written as the finite disjoint union of those centered cylinders $[w^i]_{-N}^N$ where $w^i\in \cL_{2N+1}(X)$, $N=\max\{|r|,|s|\}$ satisfying $w^i_r\dots w^i_s=v$. Clearly the cylinders $[w^i]_{-N}^N$ in this disjoint union can be computationally identified from $v$ and a given oracle of $\cL(X)$. Thus we can compute $\mu([v]_r^s)$ at precision $2^{-n}$ by computing the corresponding $\mu([w^i]_{-N}^N)$ at precision $\frac{2^{-n}}{|\cL_{2N+1}(X)|}$ and adding the outputs of the corresponding approximations. 
\end{remark}
 We further note that if the measure $\mu$ is invariant then \eqref{computcyl} needs only to hold for standard cylinders $[w]$, i.e. the Turing machine $\chi=\chi(w,n)$ does not depend on $k$. 

We start by showing that a measure to be computable is equivalent to being computable on cylinders.

\begin{proposition}\label{mu_computable}
Let $X$ be a shift space over a finite alphabet which is given by an oracle of $\cL(X)$.
Then $\mu\in \cM(X)$ is computable on cylinders if and only if  $\mu$ is computable.
\end{proposition}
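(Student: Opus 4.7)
The plan rests on one structural observation: with respect to the standard metric \eqref{defmetX}, the indicator of a centered cylinder $[w]_{-N}^N$ is $2^N$-Lipschitz. Indeed, any two points $x,y$ on opposite sides of this cylinder must disagree at some coordinate $|k|\leq N$, so $d(x,y)\geq 2^{-N}$, whence $|\mathbf{1}_{[w]_{-N}^N}(x)-\mathbf{1}_{[w]_{-N}^N}(y)|\leq 2^N d(x,y)$. Combined with Remark \ref{remcent}, this single fact pins down both directions of the equivalence by translating between $W_1$ and centered cylinder masses.

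For the forward direction, suppose $\mu$ is computable. Given $w\in\cL(X)$, $k\in\bZ$, and $n\in\bN$, Remark \ref{remcent} reduces the task to approximating $\mu([w']_{-N}^N)$ for some $w'\in\cL_{2N+1}(X)$. Using the definition of $W_1$ applied to the $2^N$-Lipschitz function $\mathbf{1}_{[w']_{-N}^N}$ one obtains, for every $\nu\in\cM(X)$,
\[
\bigl|\mu([w']_{-N}^N)-\nu([w']_{-N}^N)\bigr|\leq 2^N\,W_1(\mu,\nu).
\]
Using computability of $\mu$, produce an ideal measure $\nu\in\cS_{\cM(X)}$ with $W_1(\mu,\nu)<2^{-(n+N)}$. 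Since $\nu$ is a finite rational combination of Diracs supported on ideal points of $X$, and cylinder membership for ideal points is decidable from the oracle of $\cL(X)$, we can compute $\nu([w']_{-N}^N)$ exactly and hence output a $2^{-n}$-approximation of $\mu([w']_{-N}^N)$.

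For the reverse direction, suppose $\mu$ is computable on cylinders. Given $n\in\bN$, choose $N\geq n+1$ so that any two sequences in the same cylinder $[w]_{-N}^N$ are within distance $2^{-(N+1)}\leq 2^{-n-1}$. Enumerate $\cL_{2N+1}(X)$ via the oracle of $\cL(X)$; for each $w\in\cL_{2N+1}(X)$ designate an ideal point $s_w\in[w]_{-N}^N$, and use the cylinder-computability hypothesis to compute rational weights $q_w\geq 0$ satisfying $\sum_w q_w=1$ and $\sum_w\bigl|q_w-\mu([w]_{-N}^N)\bigr|<2^{-n-1}/\mathrm{diam}(X)$. Set $\nu=\sum_w q_w\delta_{s_w}\in\cS_{\cM(X)}$. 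A direct transport estimate bounds $W_1(\mu,\nu)$ by the sum of (i) the cost of moving the restriction of $\mu$ to each $[w]_{-N}^N$ onto $\mu([w]_{-N}^N)\delta_{s_w}$, which is at most $2^{-(N+1)}\mu([w]_{-N}^N)$ because $s_w$ lies in the cylinder of diameter $2^{-(N+1)}$, and (ii) the cost of repairing the weight discrepancies $|q_w-\mu([w]_{-N}^N)|$, which is at most $\mathrm{diam}(X)\sum_w|q_w-\mu([w]_{-N}^N)|$. Summing these yields $W_1(\mu,\nu)\leq 2^{-n-1}+2^{-n-1}=2^{-n}$, as required.

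The main technical point requiring care is arranging the ideal-point structure on $X$ so that (a) every non-empty centered cylinder in $\cL_{2N+1}(X)$ contains a readily identifiable ideal point, and (b) membership of ideal points in centered cylinders is decidable. Both are routine once ideal points are realized as (eventually) periodic extensions of words in $\cL(X)$, which the oracle of $\cL(X)$ makes explicit; beyond this setup, both implications reduce to the Lipschitz estimate and the standard transport-cost bound indicated above.
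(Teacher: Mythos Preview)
Your proposal is correct and follows essentially the same approach as the paper: both directions hinge on the $2^N$-Lipschitz bound for centered-cylinder indicators, and the implication from cylinder-computability to computability is obtained by discretizing $\mu$ onto ideal points chosen inside the cylinders of $\cL_{2N+1}(X)$. The only cosmetic difference is that the paper bounds $W_1(\mu,\nu)$ directly via the dual (Lipschitz) definition of $W_1$, whereas you reach the same estimate through a two-step transport argument via the intermediate measure $\sum_w \mu([w]_{-N}^N)\,\delta_{s_w}$.
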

\begin{proof}We first prove $\Rightarrow$. Suppose that $\mu$ is computable on cylinders. 
We note that $X$ has a natural computable metric space structure $(X,d,\cS_X)$, see \cite{burr_computability_2022}. Here $d$ is the metric defined in \eqref{defmetX}.
    Our goal is to construct a Turing machine which, given input $n\in \mathbb{N}$, computes a positive integer $k$ for which the $k$-th ideal measure $\mu_k$ in the enumeration of the set $\cS_{\cM{(X)}}=\{\mu_k:k\in \bN\}$ (see Definition \ref{defmui}) satisfies
$W_1(\mu,\mu_k)<2^{-n}.$

Let $n\in\bN$ be given. For simplicity we write $\epsilon=2^{-n}$. 
%By a slight abuse of notation, given $N\in \bN$ and $w\in \cL_{2N+1}(X)$  let us denote by $[w]_{-N}^N$ the centered cylinder starting at coordinate ${-N}$ and ending at coordinate ${N}$. Thus,   $x\in [w]_{-N}^N$ iff $x_{-N+j} = w_{1+j}$ for $0\le j \le 2N$.
We compute $N\in\bN$ such that 
${\rm diam}([w]_{-N}^N)=2^{-N}<\frac\epsilon3$ whenever $w\in \cL_{2N+1}(X)$. Let $\ell= |\cL_{2N+1}(X)|$. We fix an enumeration of
$\cL_{2N+1}(X)=\{w^1,\dots,w^\ell\},$ e.g., by ordering $\cL_{2N+1}(X)$ in the lexicographic order. Using that $\mu$ is computable on cylinders we may compute $q_1,\dots,q_l\in \bQ^+$ such that $\mu([w^i]_{-N}^N)\geq q_i$ and $|\mu([w^i]_{-N}^N)- q_i|<\frac{\epsilon}{3\ell}$ for all $i=1,\dots,\ell$.  This can be accomplished by first computing $\tilde{q}_i$ with $|\mu([w^i]_{-N}^N)-\tilde{q}_i|<\frac{\epsilon}{6\ell}$ and defining $q_i=\max\{0,\tilde{q}_i-\frac{\epsilon}{6\ell}\}$. It follows that $0\leq c_\ell=1-\sum_{i=1}^\ell q_i<\frac\epsilon3$. For each $i=1,\dots,\ell$ select  $x_i\in \cS_X\cap [w_i]_{-N}^N$. We define a measure $\nu$ by
\[
\nu=\sum_{i=1}^{\ell} q_i \delta_{x_i}  +c_\ell\delta_{x_\ell}.
\]
Clearly, $\nu$ is a probability measure and in particular belongs to $\cS_{\cM(X)}$. Thus, we can computationally select $k\in\bN$ such that $\nu=\mu_k$. It remains to be shown that $W_1(\mu,\nu)<\epsilon$ where $W_1(\mu,\nu)$ denotes the Wasserstein-Kantorovich metric of $\mu$ and $\nu$, see definition \ref{defWK}. 

Let $f\in 1$-${\rm Lip}(X)$. Thus, $|f(x)-f(y)|\leq 2^{-N}$ whenever $x,y\in w^i$ for some $i=1,\dots,\ell.$ 
Since $\int f d\mu-\int fd\nu=\int (f-c)\, d\mu-\int (f-c)\,d\nu$ for all $c\in \bR$, taking $c=\sup f $ we may assume without loss of generality that $|f(x)|\leq 1$ for all $x\in X$. It follows that
 \begin{align*}
    \left| \int_X f d\mu - \int_X f d\nu \right| &=\left| \int - c_\ell f\,d \delta_{x_\ell}+ \sum_{i=1}^\ell\left( \int_{[w^i]_{-N}^N} f d\mu - \int_{[w^i]_{-N}^N} q_i f d\delta_{x_i}\right) \right| \\[1em]
&= \bigg | \int -c_\ell f\,d \delta_{x_\ell}+ \sum_{i=1}^\ell \left( \int_{[w^i]_{-N}^N} f d\mu -   \int_{[w^i]_{-N}^N} f(x_i)\, d\mu\right) \\[1em]&\qquad +   \sum_{i=1}^\ell \left(\int_{[w^i]_{-N}^N} f(x_i)\, d\mu - \int_{[w^i]_{-N}^N} q_i f d\delta_{x_i}\right) \bigg | \\[1em]
    &\leq \left| \int -c_\ell f\,d \delta_{x_\ell}\right|+ \left|\sum_{i=1}^\ell \int_{[w^i]_{-N}^N} f - f(x_i)\, d\mu\right|   \\
    & \qquad+\left|\sum_{i=1}^\ell \left(\int_{[w^i]_{-N}^N} f(x_i)\, d\mu -  q_i f(x_i)\right) \right| \\[1em]
    &\leq  \int |c_\ell f|\,d \delta_{x_\ell} + \sum_{i=1}^\ell \int_{[w^i]_{-N}^N} \left|f - f(x_i)\right|\, d\mu 
   +  \sum_{i=1}^\ell  |f(x_i)|(\mu([w^i]_{-N}^N) -  q_i)  \\[1em]
&\leq  c_\ell + \sum_{i=1}^\ell \int_{[w^i]_{-N}^N} \frac{\epsilon}{3}\, d\mu +  \sum_{i=1}^\ell (\mu([w^i]_{-N}^N) -  q_i )\\[1em]  
    &<\frac{\epsilon}{3}+\frac{\epsilon}{3}+\frac{\epsilon}{3}=\epsilon.
 \end{align*}
This shows that $W_1(\mu,\nu)=W_1(\mu,\mu_k)<\epsilon$ which completes the proof of $\Rightarrow$.

To prove $\Leftarrow$, let us assume that $\mu$ is computable. Let $w\in \cL(X), k\in \bZ$ and $n\in \bN$. We aim to compute $\mu( [w]_k^{k+|w|-1})$ at precision $\epsilon=2^{-n}$. By Remark \ref{remcent} it is sufficient to consider  the case where $w\in \cL_{2N+1}(X)$ and $[w]_{-N}^N$ is a centered cylinder. Let $f=2^{-N}1_{[w]_{-N}^N}$, where $1_{[w]_{-N}^N}$ denotes the characteristic function of $[w]_{-N}^N$. Hence $f\in 1$-${\rm Lip}(X)$. It follows that $\mu([w]_{-N}^N)=2^N\int f d\mu$. Since $\mu$ is computable we can compute  a measure $\mu_k\in \cS_X$ such that $W_1(\mu,\mu_k)<2^{-(n+N)}$. Define $q=2^N\int fd\mu_k\in \bQ$ which is computable. It follows that 
\begin{align*}
\left|\mu([w]_{-N}^N)-q\right|&= \left| 2^N\int fd\mu-2^N\int f d\mu_k\right|\\
&= 2^N\left| \int fd\mu-\int f d\mu_k\right|\\
&\leq 2^N\cdot W_1(\mu,\mu_k)< 2^{-n}.
\end{align*}
This completes the proof for the computability of the measure of centered cylinders. 
%If $v\in \cL(X)$ and $[v]_r^s=[v_r\dots v_s]$ is a non-centered cylinder, then $[v]_r^s$ can be written as the finite disjoint union of those centered cylinders $[v^i]_{-N}^N$ where $v^i\in \cL_{2N+1}(X)$, $N=\max\{|r|,|s|\}$ satisfying $v^i_r\dots v^i_s=v_r\dots v_s$. Clearly the cylinders $[v^i]_{-N}^N$ in this disjoint union can be computationally identified from $v$ and the given oracle of $\cL(X)$. Thus we can compute $\mu([v]_r^s)$ at precision $2^{-n}$ by computing the corresponding $\mu([v^i]_{-N}^N)$ at precision $\frac{2^{-n}}{|\cL_{2N+1}(X)|}$ and adding the outputs of the corresponding approximations. 
\end{proof}
Next we establish the computability on cylinders for $\cG$-Bernoulli measures.

\begin{lemma}\label{lemcompmucyl}
Let $X = X(\cG)$ be a coded shift with generating set $\cG$ such that $\cG$ uniquely represents $X_{\rm con}(\cG)$, and let $(g_i)_{i\in \bN}$ be an oracle of $\cG$. Let $\mu$ be a $\cG$-Bernoulli measure given by positive real numbers $p_i$  and normalizing constant $c$,  see equation \eqref{measure}. Suppose $c$ is $\cG$-computable and $(p_i)_{i\in\bN}$ is uniformly computable.
 Then $\mu$ is computable on cylinders.

\end{lemma}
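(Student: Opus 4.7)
The plan is to express $\mu([w])$ for $w\in\cL_n(X)$ as a countable sum of contributions from admissible finite tuples of generators, truncate to generators of bounded length, and use the global identity $\mu(\Xcon)=1$ to detect when the truncation error is small. Since $\mu$ is shift-invariant, by Remark~\ref{remcent} it suffices to handle standard cylinders. Using that $\cG$ uniquely represents $\Xcon$, every $x\in[w]\cap\Xcon$ determines a unique \emph{admissible tuple} $(j,g^{(0)},\dots,g^{(m)})$, where $g^{(0)}$ is the generator containing position $0$, $j\in\{0,\dots,|g^{(0)}|-1\}$ its offset, and $g^{(1)},\dots,g^{(m)}$ are the subsequent generators, with $g^{(m)}$ covering position $n-1$ and the concatenation $g^{(0)}\cdots g^{(m)}$ matching $w$ on $[0,n-1]$. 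This yields the disjoint decomposition
\[
[w]\cap\Xcon \;=\; \bigsqcup_{\text{admissible}}\sg^{-j}\llbracket g^{(0)}\cdots g^{(m)}\rrbracket,
\]
and shift-invariance of $\mu$ together with \eqref{measure} gives
\[
\mu([w]) \;=\; \sum_{\text{admissible}} \frac{p_{g^{(0)}}\cdots p_{g^{(m)}}}{c}.
\]
Summing over $w\in\cA^n$ and again invoking unique representability recovers $\mu(\Xcon)=\sum_{g\in\cG}|g|p_g/c=1$.

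Next, for $N\in\NN$ I would let $\ell_w^N$ denote the restriction of the above sum to tuples whose generators all belong to $\cG_{\le N}$. Only finitely many tuples contribute (their total length is at most $n+2N$), so $\ell_w^N$ is a finite sum computable from the oracles of $\cG$ and $(p_i)_{i\in\NN}$ together with the $\cG$-computability of $c$. By monotone convergence $\ell_w^N\nearrow\mu([w])$ as $N\to\infty$, hence $M_N:=\sum_{w\in\cA^n}\ell_w^N\nearrow 1$. The Turing machine, on input $w$ and desired accuracy $2^{-n'}$, proceeds by incrementing $N$: at each stage it enumerates the finitely many admissible tuples with generators in $\cG_{\le N}$ and computes rational $\tilde\ell_{w'}^N$ with $|\tilde\ell_{w'}^N-\ell_{w'}^N|<2^{-(n'+2)}|\cA|^{-n}$ for every $w'\in\cA^n$, then halts as soon as $\sum_{w'}\tilde\ell_{w'}^N>1-2^{-(n'+1)}$ and outputs $\tilde\ell_w^N$.

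Termination is guaranteed by $M_N\nearrow 1$, and the stopping criterion combined with $\ell_w^N\le\mu([w])$ yields
\[
|\mu([w])-\tilde\ell_w^N|\;\le\;(\mu([w])-\ell_w^N)+|\ell_w^N-\tilde\ell_w^N|\;\le\;(1-M_N)+2^{-(n'+2)}\;<\;2^{-n'}.
\]
The hard part will be the error bookkeeping: the number of admissible tuples at stage $N$ grows with $N$, and each contribution involves division by an approximation of $c$ together with a product of approximations of the $p_g$'s, so the precisions at which $c$ and $(p_g)_{g\in\cG}$ are approximated must scale with the number of terms. This is precisely the role played by the $\cG$-computability of $c$ and the \emph{uniform} computability of $(p_i)_{i\in\NN}$ in the hypotheses, as opposed to mere pointwise computability of each $p_g$.
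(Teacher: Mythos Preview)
Your proposal is correct and the underlying decomposition of $[w]\cap\Xcon$ into shifted $\cG$-cylinders is the same as the paper's, but your halting strategy is genuinely different. The paper truncates by choosing, via the $\cG$-computability of $c$, an $N$ with $\sum_{|g|>N}|g|p_g<2^{-n-2}$, and then shows directly that the contribution to $\mu([w])$ from tuples containing a generator of length $>N$ is bounded by $\sum_{|g|>N}(|g|+|w|-1)\frac{1}{c}p_g$; this gives an \emph{a priori} tail estimate for each fixed $w$. You instead exploit the global identity $\sum_{w'\in\cA^n}\mu([w'])=1$: since $\ell_{w'}^N\le\mu([w'])$ and $M_N=\sum_{w'}\ell_{w'}^N\nearrow 1$, the single-word error $\mu([w])-\ell_w^N$ is dominated by the total deficit $1-M_N$, which you detect numerically. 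Your route is slightly cleaner in that it avoids the termwise bookkeeping of equations \eqref{eqgut1}--\eqref{eqgut2}, and in fact it only uses ordinary computability of $c$ (which follows from $\cG$-computability together with uniform computability of $(p_i)$) rather than the stronger $\cG$-computability directly. The price is that at each stage you must compute $\ell_{w'}^N$ for \emph{all} $|\cA|^n$ words $w'$ rather than just the input word, which is harmless for the computability statement but makes the algorithm less local. One small cosmetic point: the decomposition should read $\sg^{j}\llbracket g^{(0)}\cdots g^{(m)}\rrbracket$ rather than $\sg^{-j}$ under the paper's convention for $\sg$, though this does not affect the measure since $\mu$ is invariant.
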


\begin{proof}First recall that since $\mu$ is invariant it is enough to establish that $\mu$ is computable on standard cylinders.
    For $w\in \cL(X)$ we write $[w]_{\text{con}}=[w]\cap X_{\rm con}$  and observe that $\mu([w])=\mu([w]_{\text{con}})$. 
    Given $g_{n_0},\dots ,g_{n_k}$ we write $w\sqsubset g_{n_0}\dots g_{n_k}$ if the word $w$ is a subword of $g_{n_0}\dots g_{n_k}$ such that $w$ has a nonempty overlap with all $g_{n_i}, i=0,\dots,k$. Hence, $k\leq |w|.$ Moreover, we denote by $n(w,g_{n_0}\ldots g_{n_k})$ the number of copies of $w$ in $g_{n_0}\dots g_{n_k}$ satisfying the nonempty overlapping property. Clearly, $w\sqsubset g_{n_0}\dots g_{n_k}$ implies $n(w,g_{n_0}\ldots g_{n_k})\geq 1$. Moreover, $n(w,g_{n_0}\ldots g_{n_k})\leq |g|+|w|-1$ for all $g\in \{g_{n_0},\dots ,g_{n_k}\}.$
    It is shown in \cite{kucherenko_ergodic_2024}  that $[w]_{\text{con}}$ can be decomposed into a  union of countably many sets as
    \begin{equation}\label{cyldecom}
    [w]_{\text{con}} = \bigcup_{\substack{w\sqsubset g_{n_0}\ldots g_{n_k}}}\bigcup_{m=1}^{n(w,g_{n_0}\ldots g_{n_k})}\sigma^{\ell_m}(\llbracket g_{n_0}\ldots g_{n_k} \rrbracket),
    \end{equation}
where the integers $\ell_m$ are chosen so that the copies of $w$ with the above mentioned overlapping property match with the first coordinates of $\sigma^{\ell_m}(\llbracket g_{n_0}\ldots g_{n_k}\rrbracket)$. It follows from the overlapping property and the fact that $\cG$ uniquely represents $X_{\rm con}(\cG)$ that the sets appearing in the union \eqref{cyldecom} are pairwise-disjoint.
Therefore,
\begin{equation}\label{eqmucyl}
\mu ([w]_{\text{con}}) = \sum_{\substack{w\sqsubset g_{n_0}\ldots g_{n_k}}} n(w,g_{n_0}\ldots g_{n_k}) \mu(\llbracket g_{n_0}\ldots g_{n_k} \rrbracket).
\end{equation}
We define ${\rm ml}(g_{n_0},\dots ,g_{n_k})=\max\{|g_{n_0}|,\dots, |g_{n_k}|\}$. In the remainder of the proof we show that to approximate $\mu([w])$ it is enough to consider terms  in \eqref{eqmucyl} for which ${\rm ml}(g_{n_0},\dots ,g_{n_k})$ is uniformly bounded by a sufficiently large constant. It follows from the definition of ${\rm ml}(g_{n_0},\dots ,g_{n_k})$ that
\begin{equation}\label{eqgut1}
n(w,g_{n_0}\ldots g_{n_k})\leq  {\rm ml}(g_{n_0},\dots ,g_{n_k})+|w|-1.
\end{equation}
Fix $n \in \mathbb{N}$. We now describe a procedure for computing $\chi(w,n)$. 
Since $c=\sum_{i=1}^\infty |g_i|p_i$ is $\cG$-computable, we can  compute $N_0\in \bN$ such that $\sum_{|g|>N_0} |g|p_g<2^{-n-2}$. Then for $N=\max\{N_0,|w|\}$ we have
\begin{equation}\label{eqgut3}
\sum_{|g|>N} (|g|+|w|-1) \mu(\llbracket g \rrbracket) =\sum_{|g|>N} (|g|+|w|-1) \frac1c p_g \leq  \sum_{|g|>N} (|g|+|w|-1) p_g <\sum_{|g|>N} 2|g| p_g < 2^{-n-1}.
\end{equation}
Let us consider a fixed $g\in \cG$.
It follows from the $\cG$-cylinder definition and \eqref{eqgut1} that
\begin{equation}\label{eqgut2}
\sum_{\substack{\ast}} n(w,g_{n_0}\ldots g_{n_k}) \mu(\llbracket g_{n_0}\ldots g_{n_k} \rrbracket) \leq (|g|+|w|-1) \frac1c p_g,
\end{equation}
where the sum on the left-hand side (condition $\ast$) runs over all $g_{n_0}\ldots g_{n_k}$ with $w\sqsubset g_{n_0}\ldots g_{n_k}$, $g\in \{g_{n_0},\ldots ,g_{n_k}\}$ and 
${\rm ml}(g_{n_0}\ldots g_{n_k})=|g|$. Thus, by \eqref{eqgut3} and \eqref{eqgut2}, 
\begin{equation}\label{eqasd1}
    \sum_{\substack{w\sqsubset g_{n_0}\ldots g_{n_k}\\ {\rm ml}(g_{n_0}\ldots g_{n_k})>N}} n(w,g_{n_0}\ldots g_{n_k}) \mu(\llbracket g_{n_0}\ldots g_{n_k} \rrbracket) <  2^{-n-1}.
\end{equation}
By combining \eqref{eqmucyl}  and \eqref{eqasd1} we conclude that to complete the proof it suffices to show the computability of  
\begin{equation}\label{qwe}
\sum_{\substack{w\sqsubset g_{n_0}\ldots g_{n_k}\\ {\rm ml}(g_{n_0}\ldots g_{n_k})\leq N}} n(w,g_{n_0}\ldots g_{n_k}) \mu(\llbracket g_{n_0}\ldots g_{n_k} \rrbracket)
\end{equation}
at precision $2^{-(n+1)}$. But this is possible since the finitely many terms of $g_{n_0}\ldots g_{n_k}$ in \eqref{qwe} can be computationally identified by a simple search algorithm and each of the real numbers $n(w,g_{n_0}\ldots g_{n_k}) \mu(\llbracket g_{n_0}\ldots g_{n_k} \rrbracket)$ can be computed at any precision from the $\cG$-computability of $c$ and the uniform computability of the $(p_i)_i$. We note that here it is sufficient to compute each of the numbers $n(w,g_{n_0}\ldots g_{n_k}) \mu(\llbracket g_{n_0}\ldots g_{n_k} \rrbracket)$ at precision $\ell^{-1}\cdot 2^{-(n+1)}$ where $\ell$ is is the number of terms occurring in the  sum \eqref{qwe}.

Finally, we note that the computational steps used to output $\chi(w,n)$ can be performed by a single Turing machine $\chi$ independently of $w$ and $n$ which completes the  proof.
\end{proof}

We note that in Lemma \ref {lemcompmucyl} we do indeed require $(p_i)_i$ to be uniformly computable. This is because in order to compute the sum in \eqref{qwe}
with increasing $n$ we need an increasing number of $p_i$ approximations. These computations are required to be made by one Turing machine which becomes part of the Turing machine $\chi$. If the $p_i$ were merely computable then $\chi$ would need to include infinitely many Turing machines (one to compute each $p_i$) which is impossible since a Turing machine is given by a finite code.

We now present the proofs of Theorems \ref{thmmain}, \ref{thmmain2} and \ref{thmmain3}, and Corollary \ref{cormain11}.
\subsection{Proofs of Theorem \ref{thmmain} and Corollary \ref{cormain11}}

\begin{proof}[Proof of Theorem \ref{thmmain}]
    The proof follows directly from Proposition \ref{mu_computable} and Lemma \ref{lemcompmucyl}.
    \end{proof}

\begin{proof}[Proof of Corollary \ref{cormain11}]By hypotheses, $h_{\rm con}(X)>h_{\rm res}(X)$. It is shown in \cite{kucherenko_ergodic_2024} that in this case the (unique) measure of maximal entropy is $\cG$-Bernoulli with $p_g=\exp(-|g|\htop(X))$ and $c=\kp=\sum_{g\in \cG} |g| \exp(-|g|\htop(X))$. 
Since $\kp$ is assumed to be $\cG$-computable, by Theorem \ref{thmmain} it suffices to show that $(p_g)_g$ is uniformly computable.
But this follows from the fact that $\htop(X)$ is computable based on oracle access to $\cG$ and $\cL(X)$ which was proven in \cite{burr_computability_2022}.
\end{proof}

% Note that by combining Lemma \ref{c_computable} and Theorem \ref{thmmain} we obtain the following:

% \begin{theorem}\label{thm:c_and_mu_computable}
%     Assume that $\hcon(X)>\hres(X),$ that $0<\varepsilon<\htop - H(\cG),$ and that $M$ satsfies $\sup\{\frac{1}{n}\log |\cG_n|\colon n\geq M \}<H(\cG)+\varepsilon.$ Then $\mumax$ is computable from oracles of $\cL(X)$ and $\cG.$ 
% \end{theorem}

\subsection{Proof of Theorem \ref{thmmain2}}

The proof of Theorem \ref{thmmain2} is a direct consequence of Corollary \ref{cormain11}, the fact that  $h_{\rm top}(X)=\hcon(X)$ is computable based on oracle access of $\cG$ and $\cL(X)$ (see \cite{burr_computability_2022}), and the following Lemma which provides sufficient conditions for the $\cG$-computability of the Vere-Jones parameter. We recall the definition of the parameter $r(\cG)$ of a generating set $\cG$ in equation \eqref{eqbG}.

\begin{lemma}\label{c_computable} Let $X=X(\cG)$ be a coded shift with generating set $\cG$ such that $\cG$ uniquely represents $\Xcon$  and $\hcon(X)-r(\cG)>\varepsilon>0$. Then the Vere-Jones parameter $\kp$ is $\cG$-computable from having oracle access to $\cG$, $\hcon(X)$, and $\varepsilon$. 

\end{lemma}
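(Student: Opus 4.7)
The plan is to give an explicit tail bound for $\kp$ in terms of $N$ and $\varepsilon$, use it to compute a cutoff length $N = N(n)$, and then use the oracle of $\cG$ to locate an index $T(n)$ that captures every generator of length at most $N$.

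First I would exploit the definition of $r(\cG)$: from $r(\cG) = \sup_k \tfrac{1}{k}\log|\cG_k|$ we get $|\cG_k|\le e^{k r(\cG)}$ for all $k \in \bN$. Combined with the hypothesis $\hcon(X) - r(\cG) > \varepsilon$, this yields the tail estimate
\begin{equation*}
\sum_{k>N} k\,|\cG_k|\,\exp(-k\,\hcon(X)) \;\le\; \sum_{k>N} k\,\exp\bigl(-k(\hcon(X)-r(\cG))\bigr) \;\le\; \sum_{k>N} k\,e^{-k\varepsilon}.
\end{equation*}
The right-hand side has a closed-form expression in terms of $e^{-\varepsilon}$ and $N$ (obtained by differentiating the geometric series), so it defines a computable upper bound $B(N,\varepsilon)$ which tends to $0$ as $N\to\infty$. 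Since $\varepsilon\in\bQ$ is given by the oracle, $B(N,\varepsilon)$ can be approximated from above uniformly in $N$.

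Next, for a given input $n\in\bN$, I would have the Turing machine search for the smallest $N=N(n)$ for which the computable upper bound satisfies $B(N,\varepsilon) < 2^{-n}$. This search halts by the exponential decay in $N$. With such an $N$ in hand, the algorithm queries the oracle $\psi$ of $\cG$ sequentially: since $\psi$ is nondecreasing in word length and $\cG$ is infinite, after finitely many queries we encounter the first index $i_0$ with $|\psi(i_0)|>N$. We then set $T(n) := i_0 - 1$. Because all generators with $|g|\le N$ appear among $g_1,\dots,g_{T(n)}$, the missing part of the series lies entirely in the tail, and
\begin{equation*}
\left|\kp - \sum_{i\le T(n)}|g_i|\exp(-|g_i|\hcon(X))\right| \;\le\; \sum_{k>N} k\,|\cG_k|\exp(-k\hcon(X)) \;\le\; B(N,\varepsilon) \;<\; 2^{-n},
\end{equation*}
which is exactly the condition for $\cG$-computability of $\kp$.

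The procedure described above uses only the oracle of $\cG$ (to enumerate generators in nondecreasing length and detect when the length exceeds $N$) and the rational $\varepsilon$ (to form the computable bound $B(N,\varepsilon)$). Note that the oracle of $\hcon(X)$ is not actually needed for selecting $T(n)$, since the definition of $\cG$-computability only requires the \emph{existence} of a valid integer output; we do not compute the partial sum numerically, only certify that the tail it omits is small. The main (very mild) obstacle is producing the closed-form computable upper bound $B(N,\varepsilon)$ with explicit dependence on $N$, but this is a routine manipulation of the derivative of the geometric series, and monotonicity of $B(\cdot,\varepsilon)$ guarantees termination of the search for $N(n)$.
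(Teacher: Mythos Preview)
Your proof is correct and actually somewhat cleaner than the paper's argument. Both proofs rest on the same tail estimate
\[
\sum_{k>N} k\,|\cG_k|\,\exp(-k\,\hcon(X)) \;\le\; \sum_{k>N} k\,e^{-k\delta}
\]
for a suitable $\delta>0$, but they use it differently. The paper first produces rationals $q,\delta$ with $r(\cG)<q-\delta<q+\delta<\hcon(X)$, proves that $f(t)=\sum_k |\cG_k|k\,e^{-kt}$ is Lipschitz on $[q,\infty)$ with an explicit constant $L$, uses the oracle of $\hcon(X)$ to pick a rational $h$ close to $\hcon(X)$, computes a rational $r\approx f(h)$, and finally observes that the number of generators queried in this computation can be tracked to obtain $T(n)$. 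You instead bound the tail directly in terms of $\varepsilon$, search for a cutoff length $N$, and read $T(n)$ off the oracle of $\cG$ without ever forming a numerical approximation to $\kp$. Your observation that the oracle of $\hcon(X)$ is then unnecessary for outputting $T(n)$ is correct and is a genuine simplification; the paper needs that oracle only because it routes through the value $f(\hcon(X))$.

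One small imprecision: you write ``$\varepsilon\in\bQ$ is given by the oracle,'' but in the lemma $\varepsilon$ is given \emph{by an oracle}, i.e., as a real number with rational approximations. This is harmless: query the oracle once to obtain a positive rational $\varepsilon'<\varepsilon$ and run your argument with $\varepsilon'$ in place of $\varepsilon$. (In the application, Theorem~B, $\varepsilon$ is in fact taken rational.)
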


\begin{proof} We write $b=r(\cG)$. Since the oracle access  to $\hcon(X)$ and $\varepsilon$ is given, we can compute $q,\delta\in\bQ^+$ such that $b<q-\delta<q<q+\delta<\hcon(X)$. For $N\in \bN$ and $t\in  [q,\infty)$ consider the functions
\begin{equation}\label{eqft}
    f(t) = \sum_{g\in \cG} |g|\exp(-|g|t) =\sum_{k =1}^{\infty} |\cG_{k}| k \exp(-k t),
    \end{equation}
    and 
\begin{equation}\label{eqftN}
    f_N(t) =\sum_{k =1}^{N} |\cG_{k}| k \exp(-k t).
    \end{equation}
    Moreover, let $R_N(t)=f(t)-f_N(t)$ denote the remainder of $f$. It follows that $\kp=f(\hcon(X))$.  Since $|\cG_k|\leq e^{kb}$ for all $k\in\bN$ we conclude that
    \begin{equation}\label{eqR(t)}
     R_N(t)=\sum_{k=N+1}^\infty |\cG_k|k \exp(-k t)
     \leq \sum_{k=N+1}^\infty k\exp(-k(t-b))
      \leq \sum_{k=N+1}^\infty k\exp(-k\delta),
      \end{equation}
 which implies that $f$ is analytic on $(q,\infty)$. Performing  similar estimates on the derivative of $f$ 
 we deduce that
 \begin{equation}\label{eqlipf}
     |f'(t)|\leq \sum_{k=1}^\infty k^2 \exp(-k\delta) <\infty
 \end{equation}
 for all $t\in (q,\infty)$. Since $\delta$ was already explicitly computed, we may compute a rational number $L$ which is an upper bound of 
 $\sum_{k=1}^\infty k^2 \exp(-k\delta)$.
The Mean Value theorem implies that $L$ is a Lipschitz constant of $f\vert_{[q,\infty)}$.

Let now $n\in \bN$ be given. We desire to compute $\kp$ at precision $2^{-n}$. Recall the assumption that $\hcon(X)$ is given by an oracle. Thus, we may compute $h\in (q,\infty)\cap \QQ$ with $|h-\hcon(X)|< \frac{2^{-n-1}}{L}$. Moreover, by definition of $f(h)$ (also using equation \eqref{eqR(t)}), we can compute $r\in \bQ$ such that $|r-f(h)|\leq  2^{-n-1}$. Thus
\[|r-\kp|\leq |r-f(h)|+|f(h)-f(\hcon(X))|< 2^{-n-1}+L \frac{2^{-n-1}}{L}=2^{-n}. \]
Finally, we observe that in the computation of $r$ one can keep track of how many generators of the oracle of $\cG$ are required which completes the proof.
\end{proof}
A quantity closely related to $r(\cG)$ is the entropy of $\cG$ defined by
\[
h(\cG)=\limsup_{k\to\infty} \frac1k \log |\cG_k|.
\]
Obviously, $h(\cG)\leq r(\cG)$. Since $\cG$ is assumed to be infinite and since $ \frac1k \log |\cG_k| < \hcon(X)$ for all $k\in\bN$, it follows that $h(\cG)<\hcon(X)$ if and only if $r(\cG)<\hcon(X)$. Thus, we could have also considered  the criteria $h(\cG)<\hcon(X)$ in Lemma \ref{c_computable}. To the best of our knowledge it is not known if $r(\cG)=h(\cG)=\hcon(X)$ actually can occur. We stress that knowing that $r(\cG)<\hcon(X)$ does not automatically provide algorithmic access to  the positive gap $\varepsilon$ that is required in the proof of Lemma \ref{c_computable}.

\subsection{Proof of Theorem \ref{thmmain3}}
\begin{proof}[Proof of Theorem \ref{thmmain3}]Parts (i) and (ii) of Theorem \ref{thmmain3} are proven in
Proposition \ref{A1}. To prove part (iii) it suffices to show that the $\cG$-Bernoulli measure $\mu$ with $p_g=\exp(-|g|h_{\rm con}(X))$ and normalization constant $c=\kp=\sum_{g\in\cG}|g|\exp(-|g|h_{\rm con}(X))$ is computable. Let $L$ be an upper bound of $b(\cG)$ given by an oracle and let $N\in \bN$. By slightly increasing $L$ we may assume $L\in \QQ^+$ is a given upper bound of $b(\cG)$. We consider functions 
\begin{equation}
f(\lambda)=\sum_{g\in\cG} \lambda^{-|g|}=\sum_{k=1}^\infty |\cG_k|\lambda^{-k},
\end{equation}
and
\begin{equation}
f_N(\lambda)=\sum_{k=1}^N |\cG_k|\lambda^{-k}.
\end{equation}
Let $R_N(\lambda)=f(\lambda)-f_N(\lambda)=\sum_{k=N+1}^\infty |\cG_k|\lambda^{-k}$.
Hence $|R_N(\lambda)|\leq L\sum_{k=N+1}^\infty \lambda^{-k}$ which shows that $f$ is an analytic strictly decreasing function on $(1,\infty)$.
 Moreover, $f(\lambda)$ is computable on $(1,\infty)$ since $f_N(\lambda)$ is computable and $R_N$ can be made arbitrary small on any interval $[a,b)\subset (1,\infty)$. Here we also use the oracle access to $\cG$. By applying a simple interval division algorithm we conclude that the  unique solution $\lambda^\ast$ of the characteristic equation $f(\lambda)=1$ is computable. It is shown in \cite[Lemma 1]{kucherenko_ergodic_2024} that $\hcon(X)=\log \lambda^\ast$ which implies that $\hcon(X)$ is computable. This shows that $(p_g)_g=(\exp(-|g|h_{\rm con}(X)))_g$ is uniformly computable. 
 
 Next, we apply Lemma \ref{c_computable} to establish the $\cG$-computability of $\kp$. In order to establish the computability of $\epsilon>0$ with $\hcon(X)-r(\cG)>\epsilon$ we make the following observations: Since $\hcon(X)$ is computable we may compute $N\in \bN$ such that 
\[\max\left\{\frac{1}{k}\log |\cG_k|: k>N\right\}\leq \max\left\{\frac{1}{k}\log L: k>N\right\}<\frac12\hcon(X).\]
Note that $\frac1k\log |\cG_k|<\hcon(X)$ for all $k\in\bN$.  Therefore, by computing $\hcon(X)$ at large enough precision we can compute $\epsilon\in\bQ^+$ satisfying $\max\{\frac1k\log |\cG_k|: k=1,\dots, N\}<\epsilon <\hcon(X)$. By increasing $\epsilon$ if needed we can assure that $\epsilon>\frac12\hcon(X).$ We conclude that $\hcon(X)-r(\cG)>\epsilon$. Hence, by Lemma \ref{c_computable}, $\kp$ is $\cG$-computable. Finally,  applying Theorem \ref{thmmain} shows that $\mu$ is computable.
\end{proof}

\section{Proof of Theorem \ref{thmnoncomput}}\label{sec:proof_non_computability}

\begin{proof}[Proof of Theorem \ref{thmnoncomput}]
    Let $X^{(1)}$ and $X^{(2)}$ be transitive shifts with unique MMEs on the alphabets $\{3,4\}$ and $\{5,6\},$ respectively.  We further assume that $\htop(X^{(1)})>\htop(X^{(2)}).$ Let $0<\varepsilon<\htop(X^{(2)}).$ We choose a set $S\subset\NN$  such that $\lambda_*$ defined by
    \begin{equation}\label{eqsdfg}\sum_{n\in S}3\lambda_*^{-(n+1)}=1\end{equation}
    satisfies $\lambda_*<\exp(\varepsilon)$.
    Let $X^{(0)}$ be the $S$-gap shift defined by $S$, that is, $X^{(0)}$ is the coded shift with generating set
    \[\cG^{(0)} = \{0^s1\colon s\in S \}.\]
    We note that $\htop(X^{(0)}) < \log\lambda_* <\varepsilon.$ 
We write $\cG^{(0)}=\{g_i:  |g_{i}|<|g_{i+1}|\}.$

    Next we construct a coded shift $X$ over the alphabet $\{0,1,2,3,4,5,6\}$ with the desired properties.  Given $t\in\{ 1,2\}$ and $n\in\NN$ we select generators $g_n^{(t)}= 2w_nv2$ satisfying 
        \begin{enumerate}
            \item $w_n$ is a word containing all words in $\cL_n(X^{(t)})$
            \item $w_nv\in\cL(X^{(t)}),$ and
            \item $|g_n^{(t)}|=|g_{i_n}|$ for some increasing sequence $(i_n)_{n\in\NN}.$            
        \end{enumerate}
Now let $\cG = \cG^{(0)} \cup \cG^{(1)} \cup \cG^{(2)},$ where $\cG^{(t)}=\{g_n^{(t)}:n \in\NN\}$ for $t\in\{1,2\}.$ This defines our coded shift $X=X(\cG).$ Since for each $s\in S$ there are at most three $g\in\cG$ with $|g|=s$ while there are no $g\in\cG$ with $|g|\notin S$, it follows from \eqref{eqsdfg} and Lemma 1 in \cite{kucherenko_ergodic_2024} that
    \[\hcon(X)<\log\lambda_*<\varepsilon. \]
    Now, since every $x\in \Xcon$ contains at least one digit among $\{0,1,2\}$ and while sequences in $X^{(1)}\cup X^{(2)}$ do not, it follows that $X^{(1)}\cup X^{(2)}\subset \Xres$. Define

    \[
    \Xlim =\bigcap_{k\ge 1}\bigcup_{g\in\cG}\{x\in X\colon x[-k,k] \text{ is a subword of } g \}.
    \]
 Let $\mu\in \cM_\sigma(X)$ with $\mu(\Xres)=1.$
    It is shown in \cite{pavlov_entropy_2020} that $\mu(\Xcon\cup \Xlim)=1,$ so together with $\mu(\Xres)=1$ we obtain $\mu(\Xlim)=1.$ We observe that $\Xlim = \{0^\infty\}\cup X^{(1)}\cup X^{(2)},$ which  implies that $\mu(\{0^\infty\}\cup X^{(1)}\cup X^{(2)})=1$.
    Therefore, the variational principle implies $\htop(X)=\htop(X^{(1)}).$ Since $\htop(X^{(1)})>\htop(X^{(2)})$ it follows that $X$ has a unique MME $\mu_1$ which coincides with the unique MME of $X^{(1)}.$

    To show that $\mu_1$ is not computable from oracles of $\cL(X)$ and  $\cG,$ consider arbitrary $N,K\in\NN$ and define 
    \[
    \cL_{\leq N}(X) = \bigcup_{n\leq N}\cL_n(X),
    \]
    and
    \[
    \cG_{\leq K}(X) = \bigcup_{n\leq K}\cG_n(X).
    \]

    Let $Y$ be the coded shift generated by $\hat{\cG}=\cG^{(0)} \cup \cG^{(2)} \cup \cG_{\leq M},$ where $M=\max\{i_N,K\}.$ Then $\cL_{\leq N}(Y)=\cL_{\leq N}(X)$ and $\hat{\cG}_{\leq K}=\cG_{\leq K}.$ Note that $X^{(1)}\cap Y = \emptyset.$ Therefore, for any invariant probability measure $\mu$ on $Y$ with $\mu(Y_{\text{res}})=1$ we must have $\mu(\{0^\infty\}\cup X^{(2)})=1.$ It follows that $\htop(Y) = \htop(X^{(2)}).$ Furthermore, since $Y_{\text{con}}\subseteq \Xcon,$  it follows  that $\hcon(Y)\leq \hcon(X)<\varepsilon.$ We conclude that the MME of $Y$ is the MME of $X^{(2)},$ which we denote by $\mu_2.$ Since the MME $\mu_2$ of $Y=Y(N,K)$ is independent of $N$ and $K$ and $W_1(\mu_1,\mu_2) >0,$ we conclude that $\mu_1$ is not computable from the language and generators of $X.$ 

    To complete the proof, we observe that $\cG$ has bounded growth with $b(\cG)\leq 3$.
    Thus, it follows from Theorem \ref{thmmain3}\, (iii) that the Vere-Jones parameter $\kp$  is computable from having oracle access to $\cG$ and an upper bound of $b(\cG)$.
\end{proof}

\section{Computability of the Vere-Jones parameter}\label{sec:parameter}  
In this section, we consider the case $\hcon(X)>\hres(X)$ and show that, in general, the Vere-Jones parameter is not computable solely based on having oracle access to the generating set $\cG$ and the language $\cL(X)$ of a coded shift $X=X(\cG)$. This is surprising since having access to these oracles already implies the computability of the entropy of $X$ (see \cite{burr_computability_2022}), which, together with the generators, is the input in the formula of the Vere-Jones parameter.

\begin{example}\label{ex:Non-computability_VJP}
 There exists a coded shift $X=X(\cG)$ such that for all $N,M\in \bN$ there is a coded shift $X'=X(\cG')$ satisfying 
 \begin{itemize}
   \item $\hcon(X)>\hres(X)$ and $\hcon(X')>\hres(X')$;
   \item $\cL_N(X)=\cL_N(X')$ and the first $M$ elements of $\cG$ and $\cG'$ are the same;
   \item $|\kappa(\cG')-\kappa(\cG)|\ge \frac{1}{2}$.
 \end{itemize}
 In particular, the Vere-Jones parameter $\kappa$ of $X$ is not computable based on the oracle access to the generating set and the language of $X$.
 \end{example}
 \begin{proof}
   Let $\cA=\{0,1,2\}$. Consider a coded shift $X$ given by a generating set $\cG$ which contains no words of odd length and exactly two words of each even length. Precisely, we define
   \begin{equation}\label{eq:Def_G}
     \cG=\{01^{2n-1},\, 02^{2n-1}:n\in\bN \}.
   \end{equation} 
 It is easy to see that $\cG$ uniquely represents the concatenation set $\Xcon(\cG)$ and that the point-mass measures of $\bar{1}$ and $\bar{2}$ are the only two ergodic invariant measures which assign full measure to the residual set. Hence, $\hres(X)=0$, which immediately implies that $\hcon(X)>\hres(X)$ since any coded shift must have positive topological entropy. To compute $h_{\rm top}(X)$ we need to solve the equation 
 $\sum_{n=1}^{\infty}c(n)x^n=1$,
 where $c(2n)=2$ and $c(2n+1)=0$. We write
 $$\sum_{n=1}^{\infty}c(n)x^n=\sum_{n=1}^{\infty}2x^{2n}=\frac{2x^2}{1-x^2}$$
 and find $\lambda=3^{-\frac12}$ to be the positive solution to $\frac{2}{1-x^2}-2=1$. We conclude that $h_{\rm top}(X)=\frac{1}{2}\log 3$. We now can evaluate the Vere-Jones parameter of $X$:
 $$\kappa=\sum_{n=1}^{\infty}2\cdot 2n(3^{-\frac12})^{2n}=x\frac{d}{dx}\left(\frac{2x^2}{1-x^2}\right)|_{x=3^{-\frac12}}=3.$$
 By possibly enlarging $N$ and $M$ we may assume that $N$ is odd and $N=M$. Denote by $\cG^*$ the set of all finite concatenations of the elements of $\cG$. Consider the set of words
  \begin{equation}\label{eq:Def_G_N}
    \cG(N)=\{01^{N-1}w02^{N-1}:w\in\cG^*\}
  \end{equation}
  and let $X_N$ be the coded shift generated by $\cG\cup\cG(N)$. Note that $\cL_N(X_N)=\cL_N(X)$ and the first $N$ elements of the generating sets of $X$ and $X_N$ are the same ($X$ and $X_N$ have the same generating words up to length $2N$, then additional words of length greater or equal to $2(N+1)$ appear in the generating set of $X_N$). Since $X$ is a subshift of $X_N$ we have $h_{\rm top}(X)\le h_{\rm top}(X_N)$. Clearly $\cG\cup\cG_N$ uniquely represents the concatenation set of $X_N$ and the residual entropy of $X_N$ is zero. 
  Denote $c_N(n)= {\rm card} \{g\in\cG\cup\cG(N): |g|=n\}$. Then $c_N(n)=0$ whenever $n$ is odd, $c_N(2n)=2$ when $n\le N$, and $c_N(2n)=2+{\rm card}\{w\in\cG^*:|w|=2(n-N)\}$ for $n>N$. Using a simple induction argument we compute that $c_N(2n)=2+2\cdot 3^{n-N-1}$ for $n>N$. Indeed, we split the elements of $\cG_{2n}(N)\setminus\cG_{2n}=\{01^{N-1}w02^{N-1}:w\in\cG^*, |w|=2(n-N)\}$ into groups depending on the length of the first element from $\cG^*$ which appears in $w$. Observe that there are $2(c_N(2n-2k)-2)$ words of the form  $01^{N-1}w02^{N-1}$ with $w=g_1\dots g_m$ where $g_i\in\cG$ and $|g_1|=2k$. By the inductive hypothesis $c_N(2n-2k)-2=2\cdot 3^{n-k-N-1}$ for $k=1,...,(n-N)-1$ and there are 2 elements of $\cG_{2n}(N)$ for which $k=n-N$, so that
  \begin{equation*}
    c_N(2n)=2+\sum_{k=1}^{n-N-1}2\cdot 3^{n-k-N-1}+2=2+2\cdot 3^{n-N-1},
  \end{equation*}
  as claimed.

  We compute $h_{\rm top}(X_N)=-\log \lambda_N$ where $\lambda_N$ is the solution of the equation 
  \begin{equation}\label{eq:Entropy_X_N}
    \sum_{n\ge 1}2\lambda_N^{2n}+2\lambda_N^{2N}\sum_{n\ge 1}3^{n-1}\lambda_N^{2n}=1
  \end{equation}
  Since the topological entropy of $X$ is computable \cite{kucherenko_ergodic_2024}, $\lambda_N\to\lambda=\frac{1}{\sqrt{3}}$ from below. This can be also shown directly without invoking \cite{kucherenko_ergodic_2024}. Hence, we can find $N_0$ such that for $N\ge N_0$ we have
  \begin{equation}\label{eq:Lambda_N_conv}
    \frac{4\lambda_N^2}{(1-\lambda_N^2)^2}+\frac{1}{2(1-\lambda_N^2)}>3.5.
  \end{equation}
   To obtain the above inequality we used continuity of the right-hand side as a function of $\lambda_N$ and the fact that $\lim_{\lambda\to\frac{1}{\sqrt{3}}}\left(\frac{4\lambda^2}{(1-\lambda^2)^2}+\frac{1}{2(1-\lambda^2)}\right)=3.75$.  Let $\cG'=\cG\cup\cG(N_0)$, $X'=X_{N_0}=X(\cG')$, and $\lambda'=h_{\rm top}(X')$.
Consider the function $$f(x)=\sum_{n\ge 1}2x^{2n}+2x^{2N_0}\sum_{n\ge 1}3^{n-1}x^{2n}=\frac{2x^2}{1-x^2}+2 x^{2N_0}\left(\frac{x^2}{1-3x^2}\right).$$
  Then $f'(x)>\frac{4x}{(1-x^2)^2}+x^{2N_0}\frac{4x}{(1-3x^2)^2}$ whenever $x\in \left(0,\frac{1}{\sqrt{3}}\right)$. 
  Since by (\ref{eq:Entropy_X_N}) $f(\lambda')=1$,  a straight-forward calculation yields  $\frac{(\lambda')^{2N_0+2}}{(1-3(\lambda)'^2)^2}=\frac{1}{2(1-(\lambda')^2)}$.
Hence, 
  $$\kappa(\cG')=\lambda'f'(\lambda')>\frac{4(\lambda')^2}{(1-(\lambda')^2)^2}+\frac{1}{2(1-(\lambda')^2)}>\kappa(\cG)+\frac12.$$
 \end{proof}

Finally, we present an algorithm that establishes the computability of the topological entropy of the coded shift $X=X(\cG)$ solely based on  oracle access to $\cG$ and the computability of the Vere-Jones parameter.

\begin{proposition}\label{prop52}
Let $X=X(\cG)$ be a coded shift such that $\cG$ uniquely represents $X_{\rm con}$ and $h_{\rm con}(X)>h_{\rm res}(X)$. Suppose that the Vere-Jones parameter $\kp$ is $\cG$-computable based on having oracle access to $\cG$. Then $h_{\rm top}(X)$ is computable based on having oracle access to $\cG$.
\end{proposition}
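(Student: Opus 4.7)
The plan is to reduce the problem to computing the unique root $\lambda^\ast>1$ of the characteristic equation $f(\lambda):=\sum_{g\in\cG}\lambda^{-|g|}=1$, since under the hypothesis $\hcon(X)>\hres(X)$ we have $\htop(X)=\hcon(X)=\log\lambda^\ast$ (cf.\ \cite[Lemma 1]{kucherenko_ergodic_2024}). I will show that $\lambda^\ast$ is computable from oracle access to $\cG$ together with the Turing machine $T$ witnessing the $\cG$-computability of $\kp$, by establishing both lower and upper semi-computability of $\lambda^\ast$ and then combining them via bisection. As a preliminary normalization, after replacing $T(n)$ by $\max\{T(1),\ldots,T(n)\}$ I may assume that $\ell_n:=T(n)$ is nondecreasing, and (since $\cG$ is infinite, else the proposition is trivial) that $\ell_n\to\infty$.

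Lower semi-computability of $\lambda^\ast$ uses only the oracle of $\cG$: given $\lambda\in\bQ\cap(1,\infty)$, enumerate $N\in\bN$ and test whether $f_N(\lambda):=\sum_{i\le N}\lambda^{-|g_i|}>1$. Since $f_N(\lambda)\nearrow f(\lambda)$ and $f$ is strictly decreasing on $(1,\infty)$, such $N$ exists if and only if $f(\lambda)>1$, equivalently $\lambda<\lambda^\ast$; this semi-decides $\lambda<\lambda^\ast$.

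The main step, and the one I expect to be the principal obstacle, is upper semi-computability. The key tail estimate is that, with $p_i=\lambda^{\ast-|g_i|}$, the Turing machine $T$ yields
\[
\sum_{i>\ell_n}\lambda^{\ast-|g_i|}\;\le\;\sum_{i>\ell_n}|g_i|\lambda^{\ast-|g_i|}\;<\;2^{-n},
\]
where I used $|g_i|\ge 1$; hence $f_{\ell_n}(\lambda^\ast)>1-2^{-n}$. The proposed semi-decision procedure for $\lambda>\lambda^\ast$ is: for each $n$ compute $\ell_n=T(n)$ and the rational $f_{\ell_n}(\lambda)$, halting with YES as soon as $f_{\ell_n}(\lambda)+2^{-n}<1$. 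If $\lambda>\lambda^\ast$, then $f(\lambda)<1$ and $f_{\ell_n}(\lambda)\nearrow f(\lambda)$ as $\ell_n\to\infty$, so the halting condition is eventually satisfied. If instead $\lambda\le\lambda^\ast$, then termwise $\lambda^{-|g_i|}\ge\lambda^{\ast-|g_i|}$, whence $f_{\ell_n}(\lambda)\ge f_{\ell_n}(\lambda^\ast)>1-2^{-n}$, and the halting condition is never met.

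Combining both semi-computability procedures by standard bisection, for any input $n$ I can effectively locate $\lambda^\ast$ in a rational interval of length less than $2^{-n}$, yielding computability of $\lambda^\ast$ and therefore of $\htop(X)=\log\lambda^\ast$. The crux lies in the upper semi-decision step: one must rule out spurious YES answers when $\lambda\le\lambda^\ast$, and this is precisely what the monotonicity inequality $f_{\ell_n}(\lambda)\ge f_{\ell_n}(\lambda^\ast)$ combined with the $\kp$-derived tail bound accomplishes.
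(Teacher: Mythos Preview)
Your proof is correct and takes a genuinely different route from the paper's own argument. Both proofs proceed by establishing lower and upper semi-computability separately, and both use the $\cG$-computability of $\kp$ only for the upper half. The differences are in the auxiliary function and in the concrete mechanisms. The paper works with the Vere-Jones series $t\mapsto\sum_i |g_i|\exp(-|g_i|t)$, whose value at $\htop(X)$ equals $\kp$, and obtains upper approximations $h_n\searrow\htop(X)$ by a recursive grid search on $[0,h_n]$ using partial sums $f_m$ against computable lower approximations $q_n\nearrow\kp$; for lower semi-computability it simply cites \cite{burr_computability_2022}. You instead work with the characteristic series $\lambda\mapsto\sum_i\lambda^{-|g_i|}$, whose value at $\lambda^\ast=e^{\htop(X)}$ equals $1$, and give clean semi-decision procedures on both sides: on the lower side directly from partial sums, and on the upper side by the key inequality $\sum_{i>\ell_n}\lambda^{\ast-|g_i|}\le\sum_{i>\ell_n}|g_i|\lambda^{\ast-|g_i|}<2^{-n}$, which transfers the $\kp$-tail bound to the characteristic series and yields the no-false-positive guarantee $f_{\ell_n}(\lambda)\ge f_{\ell_n}(\lambda^\ast)>1-2^{-n}$ for $\lambda\le\lambda^\ast$. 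Your argument is more elementary and self-contained (no external citation for lower semi-computability), and the upper step is conceptually tidier than the paper's grid search; the paper's approach stays in the entropy variable and ties the algorithm more visibly to the value $\kp$ itself. One small point worth making explicit in your write-up: the ``standard bisection'' from two-sided semi-decidability should be stated as trisection (or dovetailing two nearby test points), since testing a single midpoint may fail to halt if it happens to equal $\lambda^\ast$; and for the initial bracketing interval you can take $(1,d]$ with $d$ the alphabet size, which is readable from the oracle of $\cG$.
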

\begin{proof}
Suppose $\cG=\{g_i: |g_i|\leq |g_{i+1}\}$ is given by an oracle. It follows from Propositions 24 and 29 in \cite{burr_computability_2022} that $h_{\rm top}(X)$ is lower semi-computable based on having oracle access to $\cG$. Here one also uses the well-know fact that the coded shift $X_m$ given by the first $m$ generators $\cG_m=\{g_1,\dots,g_m\}$ of $\cG$ is sofic and that a Turing machine can compute the associated finite directed labeled graph from $\cG_m$.

 We will show that $h_{\rm top}(X)$ is upper semi-computable by providing a simple search algorithm. 
Consider functions $$f(t)=\sum_{i=1}^\infty |g_i|\exp(-|g_i|t)\text{ and }f_\ell(t)=\sum_{i=1}^\ell |g_i|\exp(-|g_i|t),$$ where $\ell\in \bN$. Then $f(h_{\rm top}(X))=\kp$, and $f$ is a finite strictly decreasing function on $[\htop(X),\infty)$.
Let $T=T(n)=q_n$ be a Turing machine, which on input $n$ outputs a rational number $q_n$ such that $|\kp -q_n|<2^{-n}$. By subtracting $2^{-n}$ from $q_n$ and taking maxima we may assume that $(q_n)_n$ is non-decreasing and converges to $\kp$ from below.  

We now define a recursive algorithm that computes $\htop(X)$ from above. For $n=1$ we define $h_1$ to be a rational number greater or equal than $\log d$. Suppose  that $h_1,\dots,h_n$ have been computed. Since $\kappa$ is $\cG$-computable, we can compute the number $m=m(n)$ of the first $m$-generators in $\cG$ that $T$ queries to compute $\kp$ at precision $2^{-n}$ from below. Partition $[0,h_n]$ into $2^n$ equidistant rational numbers $0=t_1,\dots,t_{2^n}=h_n$. For each $t_i$  compute $f_m(t_i)$ at precision $2^{-n}$  denoted by $p_i$. Finally, define $h_{n+1}$ as the minimum of the set 
$\{t_i: p_i\leq q_n+2^{-n}\}\cup \{h_n\}$. Clearly, $(h_n)_n$ is a non-increasing sequence of rational numbers. Moreover, since $f$ is strictly decreasing, for all $\xi>\htop(X)$ there exists $n\in\bN$ such that $h_n<\xi$. This shows that $(h_n)_n$ converges to $\htop(X)$ from above. We conclude that $\htop(X)$ is computable based on oracle access to $\cG$. 
\end{proof}
We note that the key feature of Proposition \ref{prop52} is that we do  not require  oracle access to the language of $X$. Under the assumption $h_{\rm con}(X)>h_{\rm res}(X)$ having oracle access to the generating set and language of $X$, the computability of $\htop(X)$ has already been established in \cite{burr_computability_2022}.

\section{Examples}\label{examples}

\subsection{$\beta$-shifts}
A well-studied class of shift spaces is given by the $\beta$-shifts, which arise from the interval expanding maps $T_\beta\colon [0,1) \to [0,1)$ defined by
\[
T_\beta(x) = \beta x \bmod 1.
\]
These maps correspond to non-integer representations of real numbers. When $\beta$ is an integer, the resulting expansions reduce to the familiar $n$-ary representations (for example, binary when $\beta=2$, decimal when $\beta=10$, and so on). The maps $T_\beta$ are known as $\beta$-transformations, and their ergodic properties were first studied by Rényi and Parry \cite{renyi_representations_1957, parry_-expansions_1960}. We briefly introduce $\beta$-shifts below and then verify that Theorem~\ref{thmmain2} applies to this class of systems.

Let $d$ be the number of pieces of monotonicty of $T_\bt,$ that is $d=\ceil{\bt}.$ By using the partition 

\[
J_0 = \left[ 0,\frac{1}{\bt}\right), J_1 = \left[\frac{1}{\bt},\frac{2}{\bt}\right),J_{d-1}=\left[\frac{d-1}{\bt},1 \right) 
\]
we can associate $T_{\bt}$ to a (one-sided) subshift $X(\bt)$ of $\{0, \dots, d\}^{\NN}$, known as a $\bt$-shift. As with all shifts, they can be represented by a directed labeled graph, and by taking bi-infinite paths on this graph we can obtain a two-sided coded shift $\hat{X}(\bt) \subset \{0, \dots , d\}^{\ZZ}.$ We detail this procedure (see \cite{kucherenko_ergodic_2024}) here before applying our results. Every number $p \in [0, 1)$ is associated to a sequence $\Om(p) \in \{0, \dots , d\}^{\NN}$ defined by $\Om(p)_n = i \Longleftrightarrow T^{-n}_\bt(p)\in J_i.$ The shift $X(\bt)$ is then the closure of $\Om([0, 1)).$

Now let $d_\beta(1)=(\oldepsilon_k)$ be the $\beta$-expansion of 1, i.e. $d_\beta(1) = \lim_{p\to1} \Om(p).$ The map $\Om$ intertwines the linear ordering on $[0, 1)$ with the lexicographic ordering on $\{0, \dots, d\}^{\NN}$, which we denote by $\preceq.$ The sequences in $\{0, \dots, d\}^{\NN}$ that belong to $X(\bt)$ can be characterized in the following way \cite{parry_-expansions_1960}:
\begin{equation}\label{eqbetab}
X_\bt = \{x\in \{0,\dots, d-1 \}^{\NN}\colon \sg^k(x)\preceq d_\beta(1) \text{ for all }k\ge 0 \}.
\end{equation}
Next we define the two-sided $\beta$-shift $\hat{X}_\beta$ as the coded shift generated by
\begin{align}\label{beta_generators}
    \cG(\bt) &= \{\oldepsilon_1\cdots \oldepsilon_ji \colon i<\oldepsilon_{j+1} \}\cup\{0,\dots,\floor{\beta}-1 \}.
\end{align}
Hofbauer showed that $X(\bt)$ has a unique MME \cite{hofbauer_-shifts_1978}. In \cite{kucherenko_ergodic_2024} it is shown that $\hat{X}_\beta$ has a unique MME which is $\cG(\bt)$-Bernoulli with $p_g = \beta^{-|g|}$ and Vere-Jones parameter $\kp = \sum_{g\in \cG(\bt)}|g|p_g.$ The following result establishes the computability of the unique MME of the $\beta$-shift based on oracle access to its generators $\cG(\beta)$. We stress that while we technically only require the generators as input, the algorithm also uses specific "knowledge" about being a $\beta$-shift, e.g.,  \eqref{eqbetab} and \eqref{beta_generators}.

\begin{corollary}\label{corbeta1}Let $\beta>1$ be a non-integral real number. 
    Let $\hat{X}_\bt$ be the 2-sided $\bt$-shift, with generating set as in equation \eqref{beta_generators}. Then $\beta$ and the unique MME of $\hat{X}_\bt$ are computable based on oracle access to $\cG(\beta)$.
\end{corollary}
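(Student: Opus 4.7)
The plan is to apply Theorem \ref{thmmain} to $X=\hat X_\bt$ with $\cG=\cG(\bt)$, deriving from an oracle of $\cG(\bt)$ alone (i) the number $\beta$, (ii) the uniform computability of $(p_g)_g=(\beta^{-|g|})_g$, and (iii) the $\cG$-computability of $\kp$.

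First, I would extract $d_\beta(1)=(\oldepsilon_k)_{k\ge 1}$ from $\cG(\bt)$. Inspecting \eqref{beta_generators}, one sees that for $k\ge 1$ the set $\cG(\bt)_k$ has cardinality $\oldepsilon_k$, and for $k\ge 2$ it consists precisely of the words $\oldepsilon_1\cdots\oldepsilon_{k-1}i$ with $0\le i<\oldepsilon_k$. Since the oracle lists generators in non-decreasing order of length, $\oldepsilon_1,\dots,\oldepsilon_N$ can be read off after finitely many queries for every $N$. The identity $\sum_{k\ge 1}\oldepsilon_k\beta^{-k}=1$ then determines $\beta$: on $(1,\floor{\beta}+1)$ the function $f(\lambda)=\sum_{k\ge 1}\oldepsilon_k\lambda^{-k}$ is strictly decreasing, its partial sums $f_N$ are computable in $\lambda$, and the tail satisfies $f(\lambda)-f_N(\lambda)\le \floor{\beta}\,\lambda^{-N}/(\lambda-1)$, so a straightforward bisection outputs $\beta$ to arbitrary precision. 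Consequently $\htop(\hat X_\bt)=\hcon(\hat X_\bt)=\log\beta$ is computable and $(\beta^{-|g|})_{g\in\cG(\bt)}$ is uniformly computable.

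Next, I would show that $\kp=\sum_{g\in\cG(\bt)}|g|\beta^{-|g|}$ is $\cG$-computable by appealing to Lemma \ref{c_computable}. Since $\oldepsilon_k\le\oldepsilon_1=\floor{\beta}$ for all $k\ge 1$,
\[
r(\cG(\bt))=\sup_{k\ge 1}\tfrac1k\log\oldepsilon_k\le\log\floor{\beta}.
\]
As $\beta$ is non-integral, $\log\beta-\log\floor{\beta}>0$, and with $\beta$ and $\floor{\beta}$ already produced one can compute a rational $\varepsilon\in(0,\,\hcon(\hat X_\bt)-r(\cG(\bt)))$. Lemma \ref{c_computable} then yields the $\cG$-computability of $\kp$.

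Having verified both hypotheses of Theorem \ref{thmmain}, the $\cG(\bt)$-Bernoulli measure with weights $p_g=\beta^{-|g|}$ and normalization $\kp$ --- which is the unique MME of $\hat X_\bt$ by the results of \cite{kucherenko_ergodic_2024} recalled in the excerpt --- is computable from an oracle of $\cG(\bt)$, along with $\beta$ itself. The crux of the argument (rather than a true obstacle) is the observation that the combinatorics of $\cG(\bt)$ encodes $d_\beta(1)$ directly in the length distribution of its generators, so that $\beta$, $\htop$, and the gap $\hcon-r(\cG)$ are all accessible from the oracle of $\cG(\bt)$ without invoking a separate oracle for $\cL(\hat X_\bt)$.
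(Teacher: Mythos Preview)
Your proof is correct. The key observations---that $|\cG(\bt)_k|=\oldepsilon_k$, that $\beta$ is recoverable from the identity $\sum_k\oldepsilon_k\beta^{-k}=1$, and that $r(\cG(\bt))\le\log\lfloor\beta\rfloor<\log\beta=\hcon(\hat X_\bt)$---are all valid and suffice for Lemma~\ref{c_computable} and Theorem~\ref{thmmain}.

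The paper takes a slightly different route. It first derives an oracle for $\cL(\hat X_\bt)$ from $\cG(\bt)$ via \eqref{eqbetab}, cites \cite{burr_computability_2022} for the computability of $\htop(\hat X_\bt)=\log\beta$ from the oracles of $\cG$ and $\cL$, and then invokes Theorem~\ref{thmmain3} using the bounded-growth condition $b(\cG)=|\cG_1|=\lfloor\beta\rfloor$. Your argument is more self-contained: you bypass the language oracle and the external citation entirely, computing $\beta$ directly from the characteristic equation and then feeding the explicit gap $\log\beta-\log\lfloor\beta\rfloor$ into Lemma~\ref{c_computable}. In substance the two proofs are the same (the proof of Theorem~\ref{thmmain3} unwinds to exactly your steps), but your packaging makes it clearer that only the oracle of $\cG(\bt)$ is needed, not that of $\cL(\hat X_\bt)$.
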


\begin{proof}
Given the oracle of $\cG(\beta)$, the computability $\cL(\hat{X}_\bt)$ (as well as of $b(\beta)$) follows from \eqref{eqbetab} and \eqref{beta_generators}. It is shown in \cite{burr_computability_2022} that $h_{\rm res}(\hat{X}_\bt)=0$ and that $h_{\rm top}(\hat{X}_\bt)$ is computable from the oracles of $\cG(\beta)$ and $\cL(\hat{X}_\bt)$. Since $h_{\rm top}(\hat{X}_\bt)=\log \beta$ we conclude that $\beta$ is computable.
Note that
$\hat{X}_\bt$ is of bounded growth $b(\cG)=|\cG_1|$.
  Thus, the measure $\mu$ in Theorem \ref{thmmain3} is the unique MME of $\hat{X}_\bt$ and is computable.
    \end{proof}
\begin{corollary}Let $\beta>1$ be a non-integral real number given by an oracle. 
    Let $X_\bt$ be the one-sided $\bt$-shift, with generating set as in equation \eqref{beta_generators}. Then $\beta$ and the unique MME of $X_\bt$ are computable based on oracle access $\cG(\bt).$
\end{corollary}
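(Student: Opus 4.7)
The plan is to reduce the one-sided case to the two-sided case established in Corollary \ref{corbeta1}. Let $\pi \colon \hat{X}_\beta \to X_\beta$ denote the natural projection which forgets the coordinates with negative indices. This is a surjective factor map which intertwines the shift actions, and it is well known that pushing forward by $\pi$ gives an entropy-preserving bijection between $\cM_\sigma(\hat{X}_\beta)$ and $\cM_\sigma(X_\beta)$ (the inverse being the natural extension). Consequently, the unique MME of the one-sided shift $X_\beta$ is  $\mu_\beta = \pi_\ast \hat{\mu}$, where $\hat{\mu}$ is the unique MME of $\hat{X}_\beta$. The computability of $\beta$ follows immediately from Corollary \ref{corbeta1}.

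To establish the computability of $\mu_\beta$, I would first argue that $\mu_\beta$ is computable on cylinders. For any word $w \in \cL(X_\beta) = \cL(\hat{X}_\beta)$, the preimage under $\pi$ of the cylinder $[w] \subset X_\beta$ is precisely the cylinder $[w]_{0}^{|w|-1}$ in the two-sided shift $\hat{X}_\beta$, and therefore
\[
\mu_\beta([w]) = \hat{\mu}\bigl([w]_{0}^{|w|-1}\bigr).
\]
By Corollary \ref{corbeta1}, the measure $\hat{\mu}$ is computable based on oracle access to $\cG(\beta)$, which by Proposition \ref{mu_computable} implies that it is computable on cylinders. Hence $\mu_\beta$ is computable on cylinders of $X_\beta$ from the same oracle.

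The last step is to promote computability on cylinders to computability of $\mu_\beta$ as a point in the computable metric space $(\cM(X_\beta), W_1, \cS_{\cM(X_\beta)})$. This is the one-sided analog of Proposition \ref{mu_computable}, and the proof given there adapts verbatim: one fixes the precision $\epsilon = 2^{-n}$, chooses $N$ large enough so that centered cylinders of radius $N$ (here replaced by initial cylinders of length sufficient to achieve diameter less than $\epsilon/3$) have small diameter, enumerates $\cL_{2N+1}(X_\beta)$, approximates the measures of the corresponding cylinders using the cylinder-computability, and constructs an ideal rational combination of Dirac masses at ideal points of $X_\beta$. The only adjustment is cosmetic, since the standard metric and the computable metric space structure on a one-sided full shift are defined exactly as in the two-sided case. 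I do not anticipate a genuine obstacle in this step; the main conceptual content of the corollary lies in the identification $\mu_\beta = \pi_\ast \hat{\mu}$ and the cylinder identity above.
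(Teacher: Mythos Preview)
Your proposal is correct and follows essentially the same route as the paper's proof: both derive the computability of $\beta$ from Corollary \ref{corbeta1}, identify the one-sided MME with the two-sided MME on cylinders (the paper phrases this as ``the unique MME of $X_\beta$ assigns the same measure to cylinders as the unique MME of the $2$-sided $\beta$-shift''), and then invoke a one-sided variant of Proposition \ref{mu_computable}. Your version is more explicit about the factor map $\pi$ and the push-forward identity $\mu_\beta=\pi_\ast\hat{\mu}$, but the argument is the same.
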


\begin{proof}That $\beta$ is computable is shown in the proof of Corollary \ref{corbeta1}. Recall that $X_\beta$ and $\hat{X}_\bt$ have the same language.
Since the unique MME of $X_\beta$ assigns the same measure to cylinders as the unique MME of the $2$-sided $\beta$-shift, the computability of the unique MME of  $X_\beta$ follows from Corollary \ref{corbeta1} and a variant of Lemma \ref{mu_computable} for one-sided shift spaces.
    \end{proof}

\subsection{$S$-gap and generalized gap shifts}\label{sec62}

Another class of examples with interesting dynamical properties is given by $S$-gap shifts. They consist of binary sequences and are defined as follows: Let $S\subset\NN\cup\{ 0\}.$ The $S$-gap shift given by $S$ is the coded shift generated by 
\[
\cG = \{0^s1\colon s\in S \}.
\]
One generalization of $S$-gap shifts are so-called generalized gap shifts on the alphabet $\{0,\dots,d \},$ where $d\ge 1.$ Let $S_0,\dots,S_{d-1}$ be subsets of $\NN\cup \{ 0\}.$ We will denote the group of permutations on $\{0,\dots,d-1 \}$ as $G_d$ and fix $\Pi\subset G_d.$ Note that we do not require $\Pi$ to be a subgroup. The generalized gap shift associated to $S_0,\dots,S_{d-1}$ and $\Pi$ is the coded shift generated by 

\begin{align}\label{gen_S_gap}
    \cG = \{\pi(0)^{s_{\pi(0)}}\cdots \pi(d-1)^{s_{\pi(d-1)}}d\colon \pi\in\Pi, s_j\in S_j. \}    
\end{align}
The generating set for generalized gap shifts above uniquely represents its concatenation set \cite{burr_computability_2022}. Using this result we can show the following.

\begin{corollary}\label{corgengap}
    Let $X$ be the generalized gap shift associated to $S_0,\dots,S_{d-1}$ and permutation set $\Pi.$ Then $X$ has a unique MME $\mu_{\rm max}$ which is $\cG$-Bernoulli. Moreover, the Vere-Jones parameter $\kp$ and the measure $\mu_{\rm max}$ are both computable based on having oracle access to $S_1,\dots,S_{d-1},$ and $\Pi.$
\end{corollary}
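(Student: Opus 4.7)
The plan is to reduce the corollary to Theorem \ref{thmmain2} by constructing the required oracles from the given data and then verifying its hypotheses in the setting of a generalized gap shift. The unique representability of $X_{\rm con}$ by the generating set in \eqref{gen_S_gap} is recorded in \cite{burr_computability_2022}, so the standing assumption is met. From oracles of $S_0,\dots,S_{d-1}$ and $\Pi$, I would build an oracle of $\cG$ by enumerating tuples $(\pi, s_0,\dots,s_{d-1}) \in \Pi \times S_0 \times \cdots \times S_{d-1}$ in nondecreasing order of generator length $1+\sum_i s_i$, and an oracle of $\cL(X)$ by extracting subwords of finite concatenations of enumerated generators.

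Next I would verify $h_{\rm res}(X) = 0 < h_{\rm con}(X) = h_{\rm top}(X)$ so as to apply Corollary \ref{cormain11}. Each generator ends with, and contains only a single occurrence of, the symbol $d$; hence $X_{\rm con}$ is precisely the set of biinfinite sequences in $X$ in which $d$ appears infinitely often in both directions, while $X_{\rm res}$ consists of sequences with only finitely many $d$'s in at least one direction. Any shift-invariant ergodic measure on $X_{\rm res}$ is therefore supported on $\{0,\dots,d-1\}^{\bZ}\cap X$, and the permutation structure of the generators forces such sequences to have runs of symbols appearing in the order prescribed by some $\pi\in\Pi$, which collapses the $d$-free part of $X$ to a zero-entropy subsystem. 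Corollary \ref{cormain11} then identifies the unique MME $\mu_{\rm max}$ as the $\cG$-Bernoulli measure with $p_g=\exp(-|g|h_{\rm top}(X))$ and reduces its computability to the $\cG$-computability of $\kp$.

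For this last step I would invoke Lemma \ref{c_computable} via the polynomial bound
\[
|\cG_k| \leq |\Pi|\binom{k - 1 + d - 1}{d - 1},
\]
which gives $\tfrac{1}{k}\log|\cG_k|\to 0$. Combined with the elementary fact noted in the paper that $\tfrac{1}{k}\log|\cG_k| < h_{\rm top}(X)$ for every $k$, this forces $r(\cG) < h_{\rm top}(X)$. To exhibit a computable rational $\varepsilon$ inside this gap I would compute increasingly accurate rational lower bounds $h_*$ for $h_{\rm top}(X)$ using \cite{burr_computability_2022}, determine $K$ so large that the explicit polynomial bound yields $\tfrac{1}{k}\log|\cG_k|<h_*/2$ for all $k>K$, compute $\max_{k\leq K}\tfrac{1}{k}\log|\cG_k|$ directly from the oracle of $\cG$, and iterate until $h_*$ strictly exceeds this maximum, at which point a suitable rational $\varepsilon>0$ is extracted. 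Theorem \ref{thmmain2} then delivers both the $\cG$-computability of $\kp$ and the computability of $\mu_{\rm max}$.

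The main obstacle is this final adaptive construction: since $h_{\rm top}(X)$ is only approximable, the cutoff $K$, the tail bound, and the lower approximation $h_*$ must be interleaved rather than produced in a single pass, and termination must be argued. The strict inequality $r(\cG)<h_{\rm top}(X)$ supplied by the polynomial growth of $|\cG_k|$ guarantees that a sufficiently accurate lower approximation of $h_{\rm top}(X)$ eventually certifies the gap, rendering the procedure effective.
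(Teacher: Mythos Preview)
Your proposal is correct and follows essentially the same route as the paper: build oracles for $\cG$ and $\cL(X)$ from $S_0,\dots,S_{d-1},\Pi$, use $h_{\rm res}(X)=0$ (which the paper simply cites from \cite{burr_computability_2022} rather than re-proving), obtain a polynomial bound on $|\cG_k|$ (the paper uses $d!\,k^{d-1}$ in place of your binomial bound), and then extract a computable $\varepsilon<h_{\rm top}(X)-r(\cG)$ by combining the tail estimate with a direct check of the finitely many initial terms before applying Theorem~\ref{thmmain2}. Your adaptive description of the $\varepsilon$-search is a bit more carefully spelled out than the paper's, but the argument is the same.
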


\begin{proof}
That $X$ has a unique MME $\mu_{\rm max}$ which is $\cG$-Bernoulli was already proven in \cite{kucherenko_ergodic_2024}. Here we also use $h_{\rm res}(X)=0$ (see \cite{burr_computability_2022}).
    Our goal is to apply Theorem \ref{thmmain2} to show that $\kp$ and $\mu_{\rm max}$ are computable. To do this we must show that we can recursively compute $\cL(X)$ and $\cG$ from inputs $S_1,\dots,S_{d-1},$ and $\Pi.$ By its definition, we can clearly compute $\cG.$ For $\cL(X),$ note that $\cG$ generates the language of $\Xcon,$ which in turn allows us to show that we can compute $\cL(X)$ recursively.
    A simple way to obtain a rough but sufficient upper bound for $|\cG_n|$ is the following:
    \begin{align*}
        |\cG_n| &\le |\Pi|\left|\left\{(s_0,\dots,s_{d-1})\in\{0,\cdots, n-1 \}^{d-1} \colon s_0+\cdots+s_{d-1}=n-1  \right\}\right| \\
            &\le d!\cdot  n^{d-1}.
    \end{align*}
    Thus we can estimate 
    \begin{equation}\label{eqwert}    
   \frac{1}{n}\log|\cG_n|\le \frac{\log d!}{n}+\frac{(d-1)\log n}{n}.
    \end{equation}
    Note that the right hand side in \eqref{eqwert} is decreasing in $n$ and converges to zero as $n\to\infty$. Therefore, by using  that $h_{\rm top}(X)$ is computable, we can compute $N$ such that $\frac1n\log |\cG_n|<\frac12\htop(X)$ for all $n> N$. Recall that $\frac{1}{n}\log|\cG_n|<\htop(X)$ for all $n\in\bN$. Thus, by computing $\htop(X)$ at sufficient precision, we can compute $\epsilon>0$ satisfying
     $$\htop(X)-r(\cG)>\max\left\{\frac12\htop(X),\htop(X)-\frac{1}{n}\log|\cG_n|:n\le N\right\}=\epsilon.$$
    The result now immediately follows from Theorem \ref{thmmain2}.
    \end{proof}

It should be noted that the $S$-gap shift is a generalized gap shift with $\Pi=\{{\rm id} \}$ and $d=1,$ so Corollary \ref{corgengap} also applies to $S$-gap shifts. 

\subsection{Dyck Shift}
The standard Dyck shift is an example of a coded shift with precisely two distinct ergodic MMEs, which we denote by $\mu_1$ and $\mu_2.$ The alphabet is $\{(,),[,] \}$ and the canonical generating set is $\cG = \bigcup_{n=1}^\infty W_{n},$ where $W_n$ is defined recursively as follows:
\begin{align*}
    W_1 &= \{(),[] \} \\
    W_n &= \left\{(w), [w]\colon w\text{ is a concatenation of words from }\bigcup_{k=1}^{n-1}W_k\text{ and } |w| = 2n-2 \right\}.
\end{align*}
In \cite{kucherenko_ergodic_2024} the authors define alternative generating sets $\cG^{(1)}$ and $\cG^{(2)}$  which uniquely represent $\Xcon(\cG^{(1)})$ and $\Xcon(\cG^{(2)})$, respectively. These generating sets are defined as follows:

\begin{align*}
    \cG^{(1)} &= \cG \cup \{(,[ \} \\
    \cG^{(2)} &= \cG \cup \{),] \}.
\end{align*}
Furthermore, it is shown in \cite{kucherenko_ergodic_2024} that $\hcon(X(\cG^{(i)})) = \hres(X(\cG^{(i)})) = \htop(X) = \log 3,$ and that $\mu_{i}\left(\Xcon\left(\cG^{(i)}\right)\right) = 1$ for $i\in\{1,2\}.$ It is also shown that the measures $\mu_i$ are $\cG^{(i)}$-Bernoulli with $p_g = 3^{-|g|}$ and $\kp=2.$ Taking $\kp=2$ as input to the algorithm immediately implies the $\cG^{(i)}$-computability of $\kp$.
It now follows from Theorem \ref{thmmain} that both $\mu_1$ and $\mu_2$ are computable from oracles of $\cG^{(1)}$ and $\cG^{(2)},$ respectively.
We note that in this case the algorithm also uses  the (external) information $p_g=3^{-|g|}$ and $\kp=2$.\\[0.1cm]
We remark that rather than relying on $\kp=2$ to establish the $\cG^{(i)}$-computability of $\kp$ we could have also worked with the estimates in \cite{kucherenko_ergodic_2024} to compute an $0<\epsilon<h_{\rm con}(X)-r(\cG)$ and apply Lemma
\ref{c_computable} to derive the $\cG^{(i)}$-computability of $\kp$.

\end{document}